\newfont{\teneufm}{eufm10}
\newfont{\seveneufm}{eufm7}
\newfont{\fiveeufm}{eufm5}
\def \bsigma{\bm \sigma}
\def\cA{{\mathcal A}}
\def\cF{{\mathcal F}}
\def\cG{{\mathcal G}}
\def\cH{{\mathcal H}}
\def\cJ{{\mathcal J}}
\def\cV{{\mathcal V}}
\def\cX{{\mathcal X}}
\newcommand{\rad}[1]{\mathrm{rad}\, #1}
\def\mand{\qquad \text{and} \qquad}
\def\bbbf{{\rm I\!F}}
\def\bbbc{{\mathchoice {\setbox0=\hbox{$\displaystyle\rm C$}\hbox{\hbox
to0pt{\kern0.4\wd0\vrule height0.9\ht0\hss}\box0}}
{\setbox0=\hbox{$\textstyle\rm C$}\hbox{\hbox
to0pt{\kern0.4\wd0\vrule height0.9\ht0\hss}\box0}}
{\setbox0=\hbox{$\scriptstyle\rm C$}\hbox{\hbox
to0pt{\kern0.4\wd0\vrule height0.9\ht0\hss}\box0}}
{\setbox0=\hbox{$\scriptscriptstyle\rm C$}\hbox{\hbox
to0pt{\kern0.4\wd0\vrule height0.9\ht0\hss}\box0}}}}
\def\bbbq{{\mathchoice {\setbox0=\hbox{$\displaystyle\rm
Q$}\hbox{\raise
0.15\ht0\hbox to0pt{\kern0.4\wd0\vrule height0.8\ht0\hss}\box0}}
{\setbox0=\hbox{$\textstyle\rm Q$}\hbox{\raise
0.15\ht0\hbox to0pt{\kern0.4\wd0\vrule height0.8\ht0\hss}\box0}}
{\setbox0=\hbox{$\scriptstyle\rm Q$}\hbox{\raise
0.15\ht0\hbox to0pt{\kern0.4\wd0\vrule height0.7\ht0\hss}\box0}}
{\setbox0=\hbox{$\scriptscriptstyle\rm Q$}\hbox{\raise
0.15\ht0\hbox to0pt{\kern0.4\wd0\vrule height0.7\ht0\hss}\box0}}}}
\def\bbbt{{\mathchoice {\setbox0=\hbox{$\displaystyle\rm
T$}\hbox{\hbox to0pt{\kern0.3\wd0\vrule height0.9\ht0\hss}\box0}}
{\setbox0=\hbox{$\textstyle\rm T$}\hbox{\hbox
to0pt{\kern0.3\wd0\vrule height0.9\ht0\hss}\box0}}
{\setbox0=\hbox{$\scriptstyle\rm T$}\hbox{\hbox
to0pt{\kern0.3\wd0\vrule height0.9\ht0\hss}\box0}}
{\setbox0=\hbox{$\scriptscriptstyle\rm T$}\hbox{\hbox
to0pt{\kern0.3\wd0\vrule height0.9\ht0\hss}\box0}}}}
\def\bbbs{{\mathchoice
{\setbox0=\hbox{$\displaystyle     \rm S$}\hbox{\raise0.5\ht0\hbox
to0pt{\kern0.35\wd0\vrule height0.45\ht0\hss}\hbox
to0pt{\kern0.55\wd0\vrule height0.5\ht0\hss}\box0}}
{\setbox0=\hbox{$\textstyle        \rm S$}\hbox{\raise0.5\ht0\hbox
to0pt{\kern0.35\wd0\vrule height0.45\ht0\hss}\hbox
to0pt{\kern0.55\wd0\vrule height0.5\ht0\hss}\box0}}
{\setbox0=\hbox{$\scriptstyle      \rm S$}\hbox{\raise0.5\ht0\hbox
to0pt{\kern0.35\wd0\vrule height0.45\ht0\hss}\raise0.05\ht0\hbox
to0pt{\kern0.5\wd0\vrule height0.45\ht0\hss}\box0}}
{\setbox0=\hbox{$\scriptscriptstyle\rm S$}\hbox{\raise0.5\ht0\hbox
to0pt{\kern0.4\wd0\vrule height0.45\ht0\hss}\raise0.05\ht0\hbox
to0pt{\kern0.55\wd0\vrule height0.45\ht0\hss}\box0}}}}
\def\bbbz{{\mathchoice {\hbox{$\sf\textstyle Z\kern-0.4em Z$}}
{\hbox{$\sf\textstyle Z\kern-0.4em Z$}}
{\hbox{$\sf\scriptstyle Z\kern-0.3em Z$}}
{\hbox{$\sf\scriptscriptstyle Z\kern-0.2em Z$}}}}
\def \bFq {\overline \F_q}
\newtheorem{theorem}{Theorem}
\newtheorem{lemma}[theorem]{Lemma}
\newtheorem{cor}[theorem]{Corollary}
\def\squareforqed{\hbox{\rlap{$\sqcap$}$\sqcup$}}
\def\qed{\ifmmode\squareforqed\else{\unskip\nobreak\hfil
\penalty50\hskip1em\null\nobreak\hfil\squareforqed
\parfillskip=0pt\finalhyphendemerits=0\endgraf}\fi}
\newcommand{\Card}[1]{\# #1}
\def\comp{\mathrm{comp}}
\newcommand{\ignore}[1]{}
\def\tGf{\Gamma_e^*} 
\def \F{{\bbbf}}
\def \Q{{\bbbq}}
\def\\{\cr}
\def\({\left(}
\def\){\right)}
\def\fl#1{\left\lfloor#1\right\rfloor}
\DeclareMathOperator{\cyc}{Cyc}
\DeclareMathOperator{\val}{val}
\begin{document}

\title{Functional Graphs of Polynomials over Finite Fields}

\author[Konyagin]{Sergei V.~Konyagin}
\address{Steklov Mathematical Institute,
8, Gubkin Street, Moscow, 119991, Russia} \email{konyagin@mi.ras.ru}

\author[Luca]{Florian~Luca}
\address{School of Mathematics, University of the Witwatersrand, P. O. Box Wits 2050, South Africa and
Mathematical Institute, UNAM Juriquilla, 76230 Santiago de Quer\'etaro, M\'exico}
\email{florian.luca@wits.ac.za}

\author[Mans]{Bernard Mans}
\address{Department of Computing, Macquarie University, Sydney, NSW 2109, Australia}
\email{bernard.mans@mq.edu.au}

\author[Mathieson]{Luke Mathieson}
\address{Department of Computing, Macquarie University, Sydney, NSW 2109, Australia}
\email{luke.mathieson@mq.edu.au}

\author[Sha]{Min Sha}
\address{School of Mathematics and Statistics, University of New South Wales, Sydney, NSW 2052, Australia}
\email{shamin2010@gmail.com}

\author[Shparlinski]{Igor E. Shparlinski}
\address{Department of Computing, Macquarie University, Sydney, NSW 2109, Australia}
\email{igor.shparlinski@unsw.edu.au}

\begin{abstract} Given a function $f$ in a finite field $\F_q$ 
of $q$ elements,  
we define
the functional graph of $f$ as a directed graph on $q$ nodes
labelled by the elements of $\F_q$ where
there is an edge from $u$ to $v$ if and only if $f(u) = v$.
We obtain some theoretic estimates on the number of non-isomorphic graphs
generated by all polynomials of a given degree. We then develop a simple and practical algorithm to
test the isomorphism of quadratic polynomials that has linear  memory and time complexities.
Furthermore, we extend this isomorphism testing algorithm to the general case of functional graphs,
and prove that, while its time complexity increases only slightly,
 its memory complexity remains linear.
 We exploit this algorithm to provide an upper bound on the number of
 functional graphs corresponding to
polynomials of degree $d$ over $\F_q$.
Finally, we present some numerical results and compare function graphs of quadratic
polynomials  with
 those generated  by random maps and pose interesting new problems.
\end{abstract}

\keywords{polynomial maps, functional graphs, finite fields,  character sums,
algorithms on trees}

\subjclass[2010]{05C20, 05C85, 11T06, 11T24}

\maketitle

\section{Introduction}

Let $\F_q$ be the finite field of $q$ elements
and 
of characteristic $p$.
For a function $f:\F_q \to \F_q$ we define
the functional graph of $f$ as a directed graph $\cG_f$ on $q$ nodes
labelled by the elements of $\F_q$ where
there is an edge from $u$ to $v$ if and only if $f(u) = v$.

Clearly each connected component of $\cG_f$ contains one cycle
(possibly of length 1 corresponding to a fixed point)
with several trees attached to some of the cycle nodes.

Here we are mostly interested in the graphs $\cG_f$ associated with
polynomials $f \in \F_q[X]$ of given degree $d$.

Some of our motivation comes from the natural desire to better understand
Pollard's $\rho$-algorithm (see~\cite[Section~5.2.1]{CrPom}).
We note that although this algorithm has been used and explored
for decades, there is essentially only one theoretic result
due to Bach~\cite{Bach}. In fact, even a heuristic model adequately
describing this algorithm is not quite clear, as the model of
random maps, analysed by Flajolet and Odlyzko~\cite{FlOdl}, does not take
into account the restrictions on the number of preimages. The model
analysed by MacFie and Panario~\cite{MFPan} approximates Pollard's
algorithm better but it perhaps still  does not capture it in full.
Polynomial maps can also be considered as building blocks for constructing
hash functions. For these applications,  it is important to
understand the intrinsic randomness of such maps.

Further motivation to investigation of  the graphs $\cG_f$ comes from the
theory of dynamical systems, as  $\cG_f$ fully encodes many of the dynamical
characteristics of the map $f$, such as the distribution of period (or cycle)
and pre-period lengths.

In particular, we denote by $N_d(q)$ the number of distinct (that is,
non-isomorphic) graphs $\cG_f$ generated by all polynomials
$f \in \F_q[X]$ of  degree $\deg f = d$. Since there are exactly $(q-1)q^d$ polynomials of degree $d$, we have a trivial upper bound:
$$
N_d(q) < q^{d+1}.
$$
Here, we use some ideas of Bach and Bridy~\cite{BaBr} together with some
new ingredients to obtain nontrivial bounds on $N_d(q)$.

We also design simple and practical, yet efficient, algorithms to test the isomorphism
of graphs $\cG_f$ and $\cG_g$ associated with two maps $f$ and $g$.
Furthermore, we design an efficient algorithm that generates a unique
label for each functional graph. We use these algorithms to design
an efficient procedure to list all $N_d(q)$ non-isomorphic graphs
generated by all the polynomials
$f \in \F_q[X]$ of  degree $\deg f = d$.

We conclude by presenting some numerical results for functional graphs of
quadratic polynomials. These results confirm that many (but not all, see below)
 basic characteristics of
these graphs, except the total number
of inner nodes, resemble those generated by random maps,
as analysed by~\cite{FlOdl}. A probabilistic model
of the distribution of cycles for functional graphs generated
by polynomials (and more generally, by rational functions) has been also developed
and numerically verified in~\cite{BGHKST}, see also~\cite{FlGar}.
Here, besides cycle lengths, we also
examine other characteristics of functional graphs generated by
quadratic polynomials, for example, the number of connected
components and the distribution of their sizes.
Furthermore, these numerical results exhibit some interesting statistical properties of
the graphs $\cG_f$ for which either there is no model in the setting
of random graphs, or they deviate, in a regular way, from such a model.

We also note that the periodic structure of functional graphs associated with monomial maps
$x\mapsto x^d$ over finite fields and rings
has been extensively studied (see~\cite{ChouShp,FPS,Gass,KurPom,MartPom,ShaHu,SoKr,VaSha,ZubTar}
and references therein). However, these graphs are expected to be  very
different from those associated with generic polynomials.

In characteristic zero, graphs generated by preperiodic points of a map $\psi$
(that is, by points that lead to finite orbits under iterations of $\psi$),
have also been studied, (see, for example,~\cite{DFK,Fab,FlPoSc,Mort,MoSi,Poon}).

We note that throughout the paper all implied constants in
$``O"$ symbols are {\it absolute\/}, unless stated otherwise.

\section{Bounds on the number of distinct functional graphs of polynomials}

\subsection{Upper bound}

To estimate $N_d(q)$ from the above, we use an idea of Bach and Bridy~\cite{BaBr}
which is based on the observation that for any polynomial
automorphism $\psi$ the composition map $\psi^{-1}\circ f \circ \psi$
has the same functional graph as $f$. So the idea is that if we can count the polynomials that are inequivalent under affine conjugations, this   gives an upper bound for the number of 
dynamically inequivalent polynomials 
and therefore also for $N_d(q)$. 

Thus, it needs to be shown that for any $d$
there exists a rather
small set of polynomials $\cF_d$ such that for any
polynomial $f \in \F_q[X]$ of  degree $d$ there is a
polynomial automorphism
$\psi$ such that  $\psi\circ f \circ \psi^{-1} \in \cF_d$. Then
we have $N_d(q) \le \# \cF_d$.
To construct the set $\cF_d$ we introduce a group of certain
transformations on the set of polynomials and show that
\begin{itemize}
\item polynomials in each orbit generate isomorphic
graphs;
\item each orbit is sufficiently long;
\item most of the orbits are of the size of the above group.
\end{itemize}
This approach has been used in~\cite{BaBr} for $d=2$ and
even $q = 2^n$, in which case it is especially effective and
leads to the bound
\begin{equation}
\label{eq:BB bound}
N_2(2^n) = \exp\(O\(\frac{n}{\log \log n}\)\) = q^{O\(1/\log \log \log q\)}.
\end{equation}

For general pairs $(d,q)$ this approach loses some of its power
but still leads to nontrivial results, explicitly in both $d$ and $q$.
Recall that $p$ is the characteristic of $\F_q$.

\begin{theorem}
\label{thm:Nd bound U}
For any $d\ge 2$ and $q$, we have
\begin{equation*}
\begin{split}
N_d(q)
&\le \left\{\begin{array}{ll}
q^{d-1}+(s-1)q^{d-1-\varphi(d-1)},& \text{if $p\nmid d$,}\\
q^{d-1}+(s-1)q^{d-1-\varphi(d-1)}+(q-1)q^{d/p-1},&\text{if $p \mid d$},
\end{array}
\right.
\end{split}
\end{equation*}
where $s=\gcd(q-1,d-1)$, and $\varphi$ is Euler's totient function.
In particular, $N_2(q)\le 2q-1$, and for any $d\ge 3$ we have $N_d(q)\le 3q^{d-1}$. Furthermore, $N_d(q)\le q^{d-1}$ if $p\nmid d$ and $\gcd(q-1,d-1)=1$.
\end{theorem}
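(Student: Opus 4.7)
The plan is to adapt the affine-conjugation approach of Bach and Bridy~\cite{BaBr}: the affine group $G = \{\alpha X + \beta : \alpha \in \F_q^*, \beta \in \F_q\}$ acts on polynomials by $\psi \cdot f = \psi^{-1} \circ f \circ \psi$, and $f$ and $\psi \cdot f$ always generate isomorphic functional graphs, so $N_d(q)$ is bounded by the number of $G$-orbits among degree-$d$ polynomials. A direct calculation gives $(\psi \cdot f)(X) = \alpha^{-1}(f(\alpha X + \beta) - \beta)$, with leading coefficient $a_d \alpha^{d-1}$ and $X^{d-1}$-coefficient $\alpha^{d-2}(a_{d-1} + d a_d \beta)$, so I split into the cases $p \nmid d$ and $p \mid d$ according to whether the latter can be forced to zero.

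In the case $p \nmid d$, the choice $\beta = -a_{d-1}/(d a_d)$ makes $a_{d-1} = 0$, and no other value of $\beta$ preserves this. The residual symmetry is the scaling subgroup, acting on the leading coefficient as $a_d \mapsto a_d \alpha^{d-1}$; choosing one representative from each of the $s = \gcd(q-1,d-1)$ cosets of $(d-1)$-th powers produces a set $\cN$ of $sq^{d-1}$ depressed polynomials meeting every $G$-orbit, with two elements of $\cN$ in the same $G$-orbit iff they are related by an element of $\mu_{d-1} = \{\alpha \in \F_q^* : \alpha^{d-1} = 1\}$. Burnside's lemma then yields
\begin{equation*}
\#\{G\text{-orbits}\} = q^{d-1} + \sum_{\alpha \in \mu_{d-1} \setminus \{1\}} q^{M(\alpha)},
\end{equation*}
where $M(\alpha) = \#\{i \in \{0,\ldots,d-2\} : \alpha^{i-1} = 1\}$. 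The key estimate is $M(\alpha) \le d-1-\varphi(d-1)$: if $\alpha$ has order $k > 1$ and $\ell$ is its smallest prime divisor, then $\ell \mid d-1$, so $M(\alpha) = \lfloor (d-3)/k \rfloor + 1 \le (d-1)/\ell$, while $\varphi(d-1) \le (d-1)(1-1/\ell)$ by the standard Euler formula; adding these gives $\le d-1$. Since $|\mu_{d-1} \setminus \{1\}| = s-1$, this yields the first bound.

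In the case $p \mid d$ the shift leaves $a_{d-1}$ unchanged (the linear-in-$\beta$ term $d a_d \beta$ vanishes), so this depression fails. I would split the polynomials into two subfamilies. For $f$ with some nonzero $a_{d-j}$ such that $p \nmid \binom{d}{j}$, one can still shift to kill such a coefficient, and the Burnside analysis above transfers, contributing $q^{d-1} + (s-1) q^{d-1-\varphi(d-1)}$. The exceptional polynomials are those whose support lies in $\{0, p, 2p, \ldots, d\}$, that is, $f(X) = g(X^p)$ with $\deg g = d/p$. Since $f(\alpha X + \beta) = g(\alpha^p X^p + \beta^p)$, the affine action on $f$ descends through Frobenius to an affine action on $g$, and applying the $p \nmid d/p$ depression to $g$ (or recursing on $d/p$) yields at most $(q-1) q^{d/p - 1}$ classes, producing the extra term in the second bound.

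The stated corollaries follow by substitution: for $d = 2$, $s = 1$ and the $(s-1)$-correction vanishes, giving $q$ in odd characteristic and $q + (q-1) = 2q-1$ in characteristic $2$; for $d \ge 3$, the bounds $s - 1 \le q - 1$ and $d/p - 1 \le d-2$ show each correction is at most $q^{d-1}$, yielding $N_d(q) \le 3q^{d-1}$; when $p \nmid d$ and $s = 1$ both corrections vanish, leaving $N_d(q) \le q^{d-1}$. The main obstacle is the $p \mid d$ case: cleanly identifying the exceptional Frobenius-factored family and extracting exactly the $q^{d/p - 1}$ factor (rather than the more naive $q^{d/p}$) will require a careful combinatorial analysis via Lucas' theorem of which $\binom{d}{j}$ vanish in $\F_p$, and possibly a recursive argument when $p^2 \mid d$.
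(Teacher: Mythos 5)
Your treatment of the case $p\nmid d$ is sound and, modulo organisation, matches the paper's computation: the paper applies Burnside directly over the whole affine group $(\lambda,\mu)$, bounding the number $M_d(\lambda,\mu)$ of fixed polynomials (the condition $\lambda^{d-1}=1$ and the count of $j$ with $\gcd(j-1,d-1)=1$ play the role of your $\mu_{d-1}$ and your estimate $M(\alpha)\le d-1-\varphi(d-1)$), whereas you first pass to depressed normal forms and apply Burnside only to the residual scaling action; both give $q^{d-1}+(s-1)q^{d-1-\varphi(d-1)}$.

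The genuine gap is the case $p\mid d$, which you yourself flag as an obstacle and which your outline would not survive as written. First, "shift to kill $a_{d-j}$ when $p\nmid\binom{d}{j}$" is not available: the coefficient of $X^{d-j}$ after conjugating by $\alpha X+\beta$ is a polynomial of degree $j$ in $\beta$ (with leading term $\binom{d}{j}a_d\beta^{j}$), and $\binom{d}{j}\not\equiv 0$ only makes this equation nontrivial, not solvable in $\F_q$; for $d=p$ the only admissible index is $j=d$, i.e.\ killing $a_0$, which requires $f$ to have a rational fixed point. Second, your dichotomy is not exhaustive: for $d=p$ the polynomial $f(X)=X^p+X$ has no nonzero $a_{d-j}$ with $p\nmid\binom{d}{j}$ (other than the leading one), yet it is not of the form $g(X^p)$, so it lands in neither subfamily. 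Third, even when some lower coefficient can be normalised, the assertion that "the Burnside analysis above transfers" with the same exponent $d-1-\varphi(d-1)$ does not follow: that exponent was computed for the specific normalisation $a_{d-1}=0$, whose residual symmetry is exactly the scalings in $\mu_{d-1}$; normalising $a_{d-j}$ leaves a different (and not obviously group-like) residual set of admissible $\beta$. The paper avoids all of this by never constructing normal forms in the $p\mid d$ case: it runs Burnside over the full group and splits according to the group element, not the polynomial. The only new contribution comes from $\lambda=1$, $\mu\ne 0$, where comparing coefficients forces $da_d=0$ (so $p\mid d$) and shows every $a_j$ with $p\nmid j$ is determined, giving $M_d(1,\mu)\le(q-1)q^{d/p}$; after dividing by the group order $(q-1)q$ this produces exactly the extra term $(q-1)q^{d/p-1}$, with no solvability-in-$\beta$, Lucas-type, or recursive analysis needed. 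To repair your argument you would essentially have to replace the subfamily splitting by this fixed-point count (or prove rational solvability of the normalising equations, which is false in general).
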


\begin{proof}
For $\lambda \in \F_q^*$ and $\mu \in \F_q$, we define the  automorphism
\begin{equation}
\label{eq:phi}
\phi_{\lambda,\mu}:\  X \mapsto \lambda X + \mu
\end{equation}
with inverse $\phi_{\lambda,\mu}^{-1}: \  X \mapsto \lambda^{-1}(X - \mu)$. Particularly, these  automorphisms form a group of order $(q-1)q$ in the usual way, which acts on the set of polynomials of degree $d$ as the map
$$
f(X) \to \phi_{\lambda,\mu}^{-1} \circ f \circ \phi_{\lambda,\mu}(X).
$$
 The number of the orbits of this group action can be calculated by the Burnside counting formula. This implies that
\begin{equation} \label{Burnside}
N_d(q)\le \frac{1}{(q-1)q}\sum_{(\lambda,\mu)} M_d(\lambda,\mu),
\end{equation}
where the sum runs through all the pairs $(\lambda,\mu)\in \F_q^*\times \F_q$, and $M_d(\lambda,\mu)$ is the number of polynomials of degree $d$  fixed by $\phi_{\lambda,\mu}$.

Trivially, we have
\begin{equation}
\label{eq:mu=0}
M_d(1,0)=(q-1)q^d.
\end{equation}

In the following, we want to estimate $M_d(\lambda,\mu)$ by fixing a pair $(\lambda, \mu)\in \F_q^*\times \F_q\setminus\{(1,0)\}$.

For any polynomial $f$ of degree $d$ satisfying
 $\phi_{\lambda,\mu}^{-1} \circ f \circ \phi_{\lambda,\mu}(X)= f(X)$,
 we have
$$
f(\lambda X + \mu) = \lambda f(X) + \mu.
$$
Comparing the leading coefficients we
derive
\begin{equation}
\label{eq:bad lambda}
\lambda^{d-1} = 1,
\end{equation}
which implies that
\begin{equation}
\label{eq: good lambda}
M_d(\lambda,\mu)=0
\end{equation}
for those pairs $(\lambda,\mu)$ not satisfying~\eqref{eq:bad lambda}.

First, suppose that $\lambda=1$. Note that $\mu \ne 0$.
Comparing the coefficients of $X^{d-1}$ in  $f(X + \mu)$
and  $f(X) + \mu$,
we obtain
\begin{equation}
\label{eq:da_d}
da_d = 0.
\end{equation}
Thus, $p\mid d$.
Moreover, comparing the coefficients of $X^{j-1}$ in  $f(X + \mu)$
and  $f(X) + \mu$  for every $j=1, \ldots, d$,
we also obtain
relations of the form
$$
ja_j\mu= F_j(a_d, \ldots, a_{j+1}, \mu), \qquad j=1, \ldots, d,
$$
for some polynomials
$$
F_j\in \F_q[Z_d, \ldots, Z_{j+1}, V],
$$
where in the case $j=d$ we have $F_d(V) = 0$, which corresponds to~\eqref{eq:da_d}.
In particular, for every $j=1, \ldots, d$ with $\gcd(j,p)=1$,
we see that $a_j$ is uniquely defined by $a_d, \ldots, a_{j+1}, \mu$.
Hence, for $\mu \ne 0$ we get that
\begin{equation}
\begin{split}
\label{eq:lambda =1}
M_d(1,\mu)
&\le \left\{\begin{array}{ll}
0,& \text{if $p\nmid d$,}\\
(q-1)q^{d/p},&\text{if $p \mid d$}.
\end{array}
\right.
\end{split}
\end{equation}

Assume now that $\lambda^{d-1} = 1$ but $\lambda \ne 1$, which implies that $d\ge 3$.
We see that for every $j = 0,1, \ldots, d$ there are polynomials
$$
G_j\in \F_q[Z_d, \ldots, Z_{j+1}, U, V]
$$
such that
$$
a_j(\lambda^j - \lambda) = G_j(a_d, \ldots, a_{j+1}, \lambda, \mu).
$$
Since $\lambda \ne 0,1$, and $\lambda^{d-1} = 1$, it follows
that for every $j$ with $\gcd(j-1,d-1)=1$ we have $\lambda^{j} \ne \lambda$
and thus $a_j$ is uniquely defined by $a_d, \ldots, a_{j+1}, \lambda, \mu$. So, for $d\ge 3$ and any pair $(\lambda,\mu)$ satisfying $\lambda^{d-1} = 1$ and $\lambda \ne 1$, we have
\begin{equation}
\label{eq:lambda not 1}
M_d(\lambda,\mu)\le (q-1)q^{d-1-\varphi(d-1)}.
\end{equation}
Notice that since  $\lambda^{d-1} = 1$ and $\lambda \ne 1$,  the element $\lambda$ can take at most $\gcd(q-1,d-1)-1$ values.

Using~\eqref{Burnside} together with~\eqref{eq:mu=0}, \eqref{eq: good lambda}, \eqref{eq:lambda =1}
and~\eqref{eq:lambda not 1}, we complete the proof.
 \end{proof}

\subsection{Lower bound: Idea of the proof}

Here we give a lower bound on $N_d(q)$ in the case of $\gcd(d,q-1) \ge 2$ and $\gcd(d-1,q)=1$.
In particular, this bound shows that~\eqref{eq:BB bound} does not hold for fields of odd characteristic.

The idea is based on the following observation.
Let $\cH_a$ be the functional graph of $f_a(X) = X^d + a \in \F_q[X]$ with $a \in \F_q^*$.
We note that the node $a$ is the only node with in-degree $1$, because the in-degree of every other node is
$$
e = \gcd(d,q-1) \ge 2.
$$

We now define the iterations of $f_a$
$$
f_a^{(0)}(X) = X \mand  f_a^{(k)}(X) = f_a\(f_a^{(k-1)}(X)\), \quad k=1, 2, \ldots ,
$$
and
consider the path of length $J$
\begin{equation}
\label{eq:path}
a =  f_a^{(0)}(a)\to f_a(a)=f_a^{(1)}(a) \to \cdots\to f_a^{(J)}(a)
\end{equation}
originating from $a$.
Then, the $(j+1)$-th node of this path has $e-1$  edges
towards it from  $\gamma f_a^{(j)}(a)$, where $\gamma$ runs through the elements
of the set
$$
\tGf = \Gamma_e \setminus \{1\},
$$
where
$$
\Gamma_e = \{\gamma\in \F_q~:~\gamma^e = 1\}.
$$
Finally, we observe that  $\gamma f_a^{(j)}(a)$
is an inner node if  and only if  the equation
$$
z^d + a = \gamma f_a^{(j)}(a)
$$
has a solution.

We now note that if two graphs $\cH_a$ and $\cH_b$ are isomorphic then, since $a$ and $b$
are unique nodes with the in-degree $1$ in $\cH_a$ and $\cH_b$, respectively,
the paths of the form~\eqref{eq:path} originating at $a$ and $b$, and their  neighbourhoods have to be isomorphic too.

For $j =1, 2, \ldots$, we define $\eta_j(a)$ as the number of $\gamma \in \tGf$
for which $\gamma f_a^{(j)}(a)-a$ is an $e$-th power
nonresidue.
Thus,  $\eta_j(a)$ is the number of leaves amongst the nodes
$\gamma f_a^{(j)}(a)$,
$\gamma \in \tGf$.

Therefore, for any $J$,  the number of distinct vectors
\begin{equation}
\label{eq:vect}
\(\eta_1(a),  \ldots, \eta_J(a) \), \qquad a \in \F_q^*,
\end{equation}
gives a lower bound on $N_d(q)$. Our approach is to find a proper choice of
$J$ when $q$ is sufficiently large such that each $\eta_j(a)$ ($j\ge 2$) can run through the set $\{0,1,\ldots,e-1\}$, then we can get a lower bound of the form
$$
N_d(q)\ge e^{J-1}.
$$
The idea is illustrated in Figure~\ref{fig:idea}, where each ``\,$i$\,'' ($1\le i \le e-1$) in the circles represents a node defined by some $\gamma f_a^{(j)}(a)$, $\gamma\in \tGf$.
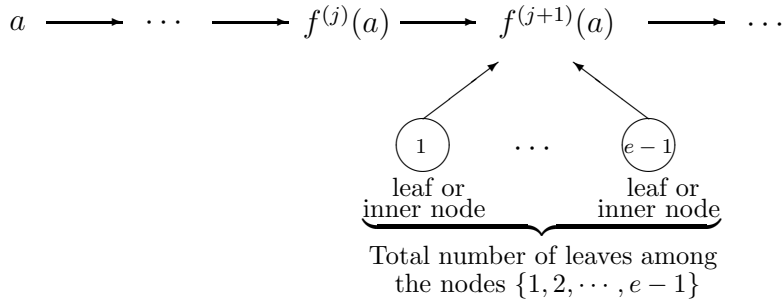
\begin{figure}[h]
\begin{center}
\setlength{\unitlength}{1cm}
\begin{picture}(20,4)(1,-1)

\put(2.0,2.53){$a$}
\put(2.5,2.63){\vector(2,0){1}}
\put(3.8,2.53){$\cdots$}
\put(4.7,2.63){\vector(2,0){1}}
\put(5.9,2.5){$f^{(j)}(a)$}  
\put(7.2,2.63){\vector(2,0){1}}
\put(8.5,2.5){$f^{(j+1)}(a)$}
\put(10.5,2.63){\vector(2,0){1}}
\put(11.8,2.5) {$\cdots$}

\put(7.5,1){\circle{0.75}}
\put(7.4,0.9){\tiny{1}}
\put(7.5,1.35){\vector(4,3){1}}
\put(8.7,0.85){$\cdots$}
\put(10.5,1){\circle{0.75}}
\put(10.17,0.9){\tiny{$e-1$}}
\put(10.5,1.35){\vector(-4,3){1}}
\put(6.7,0.2){$\underbrace{\large{\substack{ \textrm{ leaf or} \\ \textrm{inner node}}} \qquad\quad \substack{\textrm{ leaf or} \\ \textrm{inner node}}}$}
\put(6.78,-0.8){${\large\substack{\textrm{Total number of leaves among}\\ \textrm{ the nodes $\{1,2,\cdots, e-1\}$}}}$}
\end{picture}
\end{center}
\caption{Idea of lower bound} \label{fig:idea}
\end{figure}

We can express the appearance
of a particular ``pattern'' among the leaves~\eqref{eq:vect} algebraically, and use
the Weil bound of multiplicative character sums (see~\cite[Theorem~11.23]{IwKow})
to show that, when $J$ is not too large, all possible patterns appear;
see Theorem~\ref{thm:Nd bound L} and its proof for more details.
Note that this
is similar to the well-known use of the Weil bound for showing that a sequence $\{1, \ldots, p\}$ contains
any desired pattern of $J$ consecutive residues and non-residues.

\subsection{Lower bound: Technical details}

In order to realise the above approach,
we need several technical statements.

Let us consider the sequences of polynomials
$$
F_0(X) = X \mand  F_k(X) = \(F_{k-1}(X)\)^d + X, \quad k=1, 2, \ldots,
$$
and
also
$$
G_{k,\gamma}(X)  = \gamma F_k(X) - X.
$$
Note that the roots of $G_{k,\gamma}$ are exactly those $z \in  \bFq$ for which
$F_k$ twists $z$ by $\gamma^{-1}$.

We now investigate some arithmetic properties of polynomials $G_{k,\gamma}$
which we present in larger generality than we actually need for our
purposes.

\begin{lemma}
\label{lem:cong}
For any positive integers $k$ and $h$ and $\gamma, \delta \in \Gamma_e$, we have
$$
G_{k+h,\gamma} \equiv G_{h,\gamma} \pmod {G_{k,\delta}}.
$$
\end{lemma}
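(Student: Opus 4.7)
The plan is to reduce the claim to proving the stronger congruence
\[
F_{k+h}(X) \equiv F_h(X) \pmod{G_{k,\delta}}, \qquad h \geq 1.
\]
This suffices because $G_{k+h,\gamma}(X) - G_{h,\gamma}(X) = \gamma\bigl(F_{k+h}(X) - F_h(X)\bigr)$, so the $\gamma$-dependence is entirely trivial. The starting observation is that the definition of $G_{k,\delta}$ rewrites as $F_k(X) \equiv \delta^{-1} X \pmod{G_{k,\delta}}$; this is the only property of $\delta$ that will be used directly.

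I would then run an induction on $h$, with all the arithmetic content concentrated in the base case $h = 1$. Using the recursion $F_{k+1}(X) = F_k(X)^d + X$ together with the above substitution,
\[
F_{k+1}(X) \equiv (\delta^{-1} X)^d + X = \delta^{-d} X^d + X \pmod{G_{k,\delta}}.
\]
The crucial input is that $e = \gcd(d,q-1)$ divides $d$, so $\delta \in \Gamma_e$ forces $\delta^d = (\delta^e)^{d/e} = 1$, hence $\delta^{-d} = 1$ and the right-hand side collapses to $X^d + X = F_1(X)$. The inductive step is then automatic: if $F_{k+h-1}(X) \equiv F_{h-1}(X) \pmod{G_{k,\delta}}$, the common recurrence gives
\[
F_{k+h}(X) = F_{k+h-1}(X)^d + X \equiv F_{h-1}(X)^d + X = F_h(X) \pmod{G_{k,\delta}},
\]
closing the induction.

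The only point requiring real thought is the base case, namely the observation that $\delta^d = 1$ is forced by $\delta^e = 1$ via $e \mid d$; without this, the stray factor $\delta^{-d}$ would spoil the alignment between the two recurrences. This is also precisely why the induction must start at $h = 1$ rather than $h = 0$: the case $h = 0$ would demand $F_k \equiv F_0 = X \pmod{G_{k,\delta}}$, which holds only when $\delta = 1$ and is not part of the hypothesis. Once the base case is settled, the rest is a mechanical unfolding of the shared recursion.
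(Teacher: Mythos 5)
Your proof is correct and follows essentially the same route as the paper: an induction on $h$ whose only arithmetic input is that $\delta\in\Gamma_e$ and $e=\gcd(d,q-1)\mid d$ force $\delta^d=1$, so the recursion $F_{k+1}=F_k^d+X$ aligns modulo $G_{k,\delta}$. Your reformulation in terms of $F_{k+h}\equiv F_h \pmod{G_{k,\delta}}$ merely streamlines the paper's argument (which carries the $G$-polynomials through the induction and invokes the root-of-unity identity at each step), so no substantive difference or gap.
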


\begin{proof} We fix $\gamma, \delta \in \Gamma_e$ and prove the desired
statement by induction on $h=1, 2, \ldots$.

We note that for $ \delta \in \Gamma_e$  we have
\begin{equation}
\label{eq:Fd G}
\( F_{k}(X) \)^d = \(\delta^{-1}\(G_{k,\delta}(X)  + X\)\)^d = \(G_{k,\delta}(X)  + X\)^d.
\end{equation}
For $h=1$ we have $G_{1,\gamma}(X) = \gamma X^d +(\gamma-1) X$.
Hence, using~\eqref{eq:Fd G}, we derive
\begin{equation*}
\begin{split}
G_{k+1,\gamma}(X) &= \gamma  \(\( F_{k}(X) \)^d +X\) - X \\
 & = \gamma\(G_{k,\delta}(X)  + X\)^d + (\gamma-1) X \\
& \equiv \gamma X^d +(\gamma-1) X \equiv G_{1,\gamma}(X)  \pmod {G_{k,\delta}(X)},
\end{split}
\end{equation*}
so the desired congruence holds for $h=1$.

Now assume that it also holds for $h=\ell$.  Then
\begin{equation*}
\begin{split}
G_{k+\ell+1,\gamma}(X) & = \gamma \(G_{k+\ell,\gamma}(X)  +X\)^d + (\gamma-1) X \\
& \equiv
\gamma \(G_{\ell,\gamma}(X) + X\)^d + (\gamma-1)  X \\
&\equiv G_{\ell+1,\gamma}(X) \pmod {G_{k,\delta}(X)},
\end{split}
\end{equation*}
which implies the desired result.
\end{proof}

\begin{lemma}
\label{lem:gcd}
For any positive integers $k$ and $m$, we have
$$
\gcd(G_{k,\gamma},G_{m,\gamma}) = G_{\gcd(k,m),\gamma}.
$$
\end{lemma}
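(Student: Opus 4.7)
The plan is to prove the identity by mimicking the ordinary Euclidean algorithm on the index, using Lemma~\ref{lem:cong} as the mechanism that replaces integer subtraction with polynomial reduction. The observation is that Lemma~\ref{lem:cong}, specialised to $\delta=\gamma$, gives
\[
G_{k+h,\gamma}\equiv G_{h,\gamma}\pmod{G_{k,\gamma}},
\]
which is precisely the polynomial analogue of the identity $\gcd(a,b)=\gcd(a,b-a)$ when reading indices.

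Without loss of generality assume $m\ge k$ and proceed by induction on $m$. For the base case $m=k$, both sides equal $G_{k,\gamma}$. For the inductive step assume $m>k$ and set $h=m-k\ge 1$; then the displayed congruence above yields
\[
\gcd(G_{k,\gamma},G_{m,\gamma}) \;=\; \gcd(G_{k,\gamma},G_{m-k,\gamma}),
\]
since adding a multiple of $G_{k,\gamma}$ to $G_{m,\gamma}$ does not change the gcd with $G_{k,\gamma}$. The pair $(k,m-k)$ has maximum strictly less than $m$, so the inductive hypothesis applies and gives $G_{\gcd(k,m-k),\gamma}$. Finally one invokes the elementary identity $\gcd(k,m-k)=\gcd(k,m)$ to conclude.

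I do not expect any serious obstacle: once Lemma~\ref{lem:cong} is in hand the argument is purely formal, and the only small care needed is to keep track of the hypothesis $h\ge 1$ so that Lemma~\ref{lem:cong} applies (which is why we separate off the base case $m=k$ rather than letting the recursion run down to index $0$). The polynomials $G_{k,\gamma}$ are nonzero (their leading terms are $\gamma(F_{k-1}(X))^d$ of positive degree for $k\ge 1$), so gcd is well defined up to scalars throughout.
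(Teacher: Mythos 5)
Your proof is correct and follows essentially the same route as the paper: reduce $\gcd(G_{k,\gamma},G_{m,\gamma})$ to $\gcd(G_{k,\gamma},G_{m-k,\gamma})$ via Lemma~\ref{lem:cong} with $\delta=\gamma$ and run the Euclidean algorithm on the indices. The paper compresses the induction into ``which immediately implies the desired result,'' while you spell it out (induction on the maximum of the pair, plus $\gcd(k,m-k)=\gcd(k,m)$), which is a harmless elaboration of the same argument.
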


\begin{proof}
If $k = m$, then there is nothing to prove.
Otherwise we note that for $m > k$, Lemma~\ref{lem:cong}
implies $G_{m,\gamma} \equiv G_{m-k,\gamma} \pmod {G_{k,\gamma}}$. Thus
$$
\gcd(G_{k,\gamma},G_{m,\gamma}) = \gcd(G_{k,\gamma},G_{m-k,\gamma}),
$$
which immediately implies the desired result.
\end{proof}

Now, from  Lemma~\ref{lem:gcd} we derive
that for $d=2$ (and odd $q$), products of polynomials $ G_{j,-1}$ over distinct integers are not perfect squares; see Lemma~\ref{lem:sqr}.

As usual, we use $\bFq$ to denote the algebraic closure of $\F_q$.

\begin{lemma}
\label{lem:sqr}
For $d=e=2$, odd $q$, and any non-empty set $\cJ$ of positive integers, we have
$$
\prod_{j \in \cJ} G_{j,-1}   \ne P^2
$$
for any polynomial $P \in \bFq[X]$.
\end{lemma}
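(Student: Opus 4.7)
The plan is to exhibit a root of $\prod_{j\in\cJ}G_{j,-1}$ of odd multiplicity, forcing the product not to be a perfect square.

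First I would pin down the local behaviour of $G_{k,-1}$ at $X=0$. A direct induction using $F_0(X)=X$ and $F_k(X)=F_{k-1}(X)^2+X$ gives $F_k(0)=0$ and $F_k'(0)=1$, so
\[
G_{k,-1}(X)=-F_k(X)-X=-2X+O(X^2).
\]
Since $q$ is odd, $-2\neq 0$ in $\bFq$, whence $X$ divides $G_{k,-1}$ with multiplicity exactly one. Therefore $X$ divides $\prod_{j\in\cJ}G_{j,-1}$ with multiplicity exactly $|\cJ|$, and the lemma follows immediately when $|\cJ|$ is odd.

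For $|\cJ|$ even, I would work at the top of $\cJ$. Let $k=\max\cJ$ and, for $\alpha\in\bFq$, put $\operatorname{ord}(\alpha)=\min\{j\geq 1 : G_{j,-1}(\alpha)=0\}$. Lemma~\ref{lem:gcd} says $G_{j,-1}(\alpha)=0$ iff $\operatorname{ord}(\alpha)\mid j$, so any $\alpha$ with $\operatorname{ord}(\alpha)=k$ is a root of no $G_{j,-1}$ for $j\in\cJ\setminus\{k\}$ (as $j<k$ rules out $k\mid j$). Hence the multiplicity of such an $\alpha$ in $\prod_{j\in\cJ}G_{j,-1}$ equals its multiplicity $m_k(\alpha)$ in $G_{k,-1}$, and the proof reduces to producing one such $\alpha$ with $m_k(\alpha)$ odd.

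To obtain this primitive root, I would combine degree counting with Lemma~\ref{lem:gcd}. The polynomial $L_k=\operatorname{lcm}\{G_{d,-1}: d\mid k,\,d<k\}$ divides $G_{k,-1}$ by Lemma~\ref{lem:gcd}, and
\[
\deg L_k\leq\sum_{d\mid k,\,d<k}\deg G_{d,-1}=\sum_{d\mid k,\,d<k}2^d\leq 2^k-2<2^k=\deg G_{k,-1},
\]
so $G_{k,-1}/L_k$ is non-constant. An inclusion-exclusion (M\"obius) argument on the divisor lattice of $k$, together with Lemma~\ref{lem:gcd}, isolates the contribution from primitive-order-$k$ roots of $G_{k,-1}$; a derivative analysis based on the easily verified recursion $G_{k+1,-1}(X)=-(G_{k,-1}(X)+X)^2-2X$ then furnishes an order-$k$ root of $G_{k,-1}$ of odd multiplicity.

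The hardest point will be this last step in those positive characteristics where $G_{k,-1}$ may have non-trivial repeated factors at primitive roots; there one must exclude the possibility that all order-$k$ multiplicities are simultaneously even. I expect to handle this either by a direct derivative computation showing that primitive-order-$k$ roots are always simple roots of $G_{k,-1}$, or by a descent argument on the pair $(k,|\cJ|)$ peeling off the top index and reducing to a smaller configuration.
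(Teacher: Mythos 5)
Your set-up is sound and in fact follows the same skeleton as the paper: the multiplicity-one observation at $X=0$ is correct in odd characteristic, and your order/divisibility structure for roots (via Lemmas~\ref{lem:cong} and~\ref{lem:gcd}) correctly reduces the problem to exhibiting, for $k=\max\cJ$, a root of odd multiplicity of $G_{k,-1}$ that is not a root of any $G_{j,-1}$ with $j<k$. But the step that actually produces such a root is missing, and it is the entire substance of the lemma. Your degree count only shows that $G_{k,-1}/L_k$ is non-constant; since $G_{k,-1}$ may have repeated factors, an irreducible factor of $G_{k,-1}/L_k$ can perfectly well already divide $L_k$ (just with smaller exponent), so non-constancy does not yield even one root of order exactly $k$, let alone one of odd multiplicity. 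The danger you must exclude is that every odd-multiplicity root of $G_{k,-1}$ lies among the (at most $f(k)$, in the paper's notation) roots shared with the $G_{d,-1}$, $d\mid k$, $d<k$; ruling this out is exactly what you defer to a ``derivative analysis'' and a hoped-for ``descent argument,'' neither of which is carried out. Your fallback hope that primitive-order-$k$ roots are always simple is not established anywhere and is likely harder than the lemma itself.

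For comparison, the paper writes $G_{m,-1}=A^2B$ with $B$ squarefree and proves the quantitative claim $\deg B > f(m)$ for $m\ge 5$, where $f(m)$ bounds the number of distinct roots shared with lower indices; this is done by a delicate contradiction argument (using $G_{m,-1}=-F_{m-1}^2-2X$, the relation $F_{m-1}-2XF_{m-1}'=AC$, degree comparisons, and reductions modulo $F_{m-2}$ and $F_{m-3}$), while the cases $m=3,4$ require separate discriminant computations and $m=1,2$ direct checks. None of this machinery, or a substitute for it, appears in your proposal, so as written the argument has a genuine gap at its crucial point. (Your observation that the case $|\cJ|$ odd is immediate from the simple root at $X=0$ is a nice shortcut, but it does not help with the even case, which is where all the work lies.)
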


\begin{proof}

Assume that $m$ is the largest element of $\cJ$. 
Since $d=e=2$, we have 
$$
F_0(X)=X,\quad F_k(X)=F_{k-1}(X)^2+X,\quad G_{k,-1}(X)=-F_k(X)-X, 
$$
and thus 
$$
G_{1,-1}(X)=-X(X+2) \mand G_{2,-1}=-X(X+2)(X^2+1).
$$
So, the cases $m=1$ and $m=2$ can be verified by direct
calculations. 

Now, we assume that $m \ge 3$.
 It suffices to show that $G_{m,-1}(X)$ has a simple root which is not a root of
$$
Q_{m-1}(X)=\prod_{j=1}^{m-1} G_{j,-1}(X).
$$
Let $f(m)$ be the number of distinct roots of $\gcd(G_{m,-1}(X), Q_{m-1}(X))$.
By Lemma~\ref{lem:gcd}, these distinct roots 
are to be found among the distinct roots of
$$
\prod_{1\le k\le m-1} \gcd(G_{m,-1}(X),G_{k,-1}(X))=\prod_{1\le k \le m-1} G_{\gcd(m,k),-1}(X),
$$
and the distinct roots of this last polynomial are the same as the distinct roots of the polynomial
$$
\prod_{\substack{k\mid m\\ 1\le k<m}} G_{k,-1}(X),
$$
which implies that 
\begin{equation}
\label{eq:fm}
f(m) \le \sum_{\substack{k\mid m\\ 1\le k<m}} 2^k \le  \sum_{1\le k \le m/s} 2^k = 2^{m/s+1}-2,
\end{equation}
where $s$ is the minimal prime factor of $m$. More precisely, since 
the polynomial $G_{1,-1}(X)$ divides any other polynomial $G_{k,-1}(X)$,  $k\ge 1$, we have 
\begin{equation}
\label{eq:f6}
f(6)\le \deg G_{1,-1}+\deg G_{2,-1}+\deg G_{3,-1}-2\deg G_{1,-1}=10.
\end{equation}

 Now let us write 
$$
G_{m,-1}(X)=A(X)^2 B(X),
$$
where $A(X),B(X)\in \F_q[X]$ are monic polynomials and  $B(X)$ has only simple roots. 

We claim that 
\begin{equation}
\label{eq:deg B f >}
\deg B(X) > f(m) 
\end{equation}
when $m\ge 5$. 
So, if $m\ge 5$,  then $G_{m,-1}(X)$ has a root of odd multiplicity which 
is not a root of $Q_{m-1}(X)$, thus the desired result follows. 

We prove the claim by contradiction. Hence, we suppose that 
\begin{equation}
\label{eq:deg B f le}
\deg B(X)\le f(m) \mand m \ge 5.
\end{equation} 

Note that since $X=0$  is a simple root of $G_{m,-1}(X)$, we have $\deg B(X)\ge 1$. Since $A(X)$ divides $\gcd(G_{m,-1}(X),G_{m,-1}^{\prime}(X))$ and 
$G_{m,-1}(X)=-F_{m-1}(X)^2-2X$, we obtain 
$$
F_{m-1}(X)^2+2X \equiv F_{m-1}(X)F_{m-1}^{\prime}(X)+1 \equiv 0 \pmod{A(X)},
$$
which yields that 
\begin{align*}
0&\equiv F_{m-1}(X)^2F_{m-1}^{\prime}(X)+F_{m-1}(X) \\
&\equiv -2XF_{m-1}^{\prime}(X)+F_{m-1}(X)  \pmod{A(X)}.
\end{align*}
Moreover, since 
\begin{equation}
\label{eq:congX3}
F_{m-1}(X)=F_{m-2}^2(X)+X\equiv  X^2+X\pmod {X^3}
\end{equation}
and 
\begin{align*}
F_{m-1}'(X)& =2F_{m-2}(X) F_{m-2}'(X)+1\\
& \equiv 2 X (2X+1)+1\equiv 2X+1\pmod {X^2},
\end{align*}
it follows that 
\begin{align*}
F_{m-1}(X)-2XF_{m-1}'(X) & \equiv  X^2+X-2X(2X+1) \\
& \equiv  -3X^2-X\pmod {X^3},
\end{align*}
so this last polynomial is not the zero polynomial. 
In particular, there is a non-zero polynomial $C(X)\in \F_q[X] $ such that 
$$
F_{m-1}(X)-2XF_{m-1}^{\prime}(X)=A(X)C(X).
$$
Because the degree of $ F_{m-1}(X)-2XF_{m-1}^{\prime}(X)$ is at most $2^{m-1}$, we deduce that 
\begin{equation}
\label{eq:deg C}
\deg C(X) \le 2^{m-1}-\deg A(X) = \frac{1}{2}\deg B(X). 
\end{equation}

We can also write 
$$
A(X)= \frac{F_{m-1}(X)-2XF_{m-1}^{\prime}(X)}{C(X)}.
$$
Thus, we get that
 $$
 -F_{m-1}(X)^2-2X=G_{m,-1}(X)=\frac{(F_{m-1}(X)-2XF_{m-1}'(X))^2  B(X)}{C(X)^2},
 $$
 and  
 \begin{equation}
 \label{eq:3}
 -(F_{m-1}(X)^2+2X)C(X)^2=(F_{m-1}(X)-2XF_{m-1}'(X))^2  B(X).
 \end{equation}
 Using the relation
 $$
 F_{m-1}(X)=F_{m-2}(X)^2+X,\qquad F_{m-1}'(X)=2F_{m-2}(X)F_{m-2}'(X)+1,
 $$
 we reduce~\eqref{eq:3} modulo $F_{m-2}(X)$ to obtain  
 $$
 -(X^2+2X)C(X)^2\equiv X^2 B(X)\pmod {F_{m-2}(X)}.
 $$
Notice that by~\eqref{eq:deg C} the polynomial on the left hand side has degree at most $\deg B(X)+2$, and the polynomial on the right hand side has degree $\deg B(X)+2$.  
 Thus, in view of~\eqref{eq:deg B f le}, if $\deg F_{m-2}(X)>f(m)+2$, then the above congruence must in fact be an equality. Using~\eqref{eq:fm} and~\eqref{eq:f6}, the above inequality is satisfied if 
 $$
2^{m-2}> 2^{m/s+1} \quad \textrm{or} \quad m=6,
 $$
 where $s$ is the minimal prime factor of $m$. 
The above inequality is also true for $m=5$ and any integer $m\ge 7$. 

 So, if $m\ge 5$, we must have 
  $$
  -(X^2+2X) C(X)^2=X^2  B(X).
  $$
Furthermore,
  $$
  \frac{B(X)}{C(X)^2}=-\frac{X+2}{X},
  $$
  so that 
 \begin{align*}
  -F_{m-1}(X)^2-2X & =  \frac{(F_{m-1}(X)-2XF_{m-1}'(X))^2 B(X)}{C(X)^2}\\
  & =  -\frac{(X+2)}{X} (F_{m-1}(X)-2XF_{m-1}'(X))^2,
  \end{align*}
 and then 
 $$
 X(F_{m-1}(X)^2+2X)=(X+2)(F_{m-1}(X)-2XF_{m-1}'(X))^2.
 $$
 We reduce the above relation modulo $F_{m-3}(X)$ using the fact that
 \begin{align*}
 F_{m-1}(X)=F_{m-2}(X)^2+X &=(F_{m-3}(X)^2+X)^2+X \\
 &\equiv X^2+X \pmod {F_{m-3}(X)}
 \end{align*}
 and 
 \begin{align*}
 F_{m-1}'(X) &=2(F_{m-3}^2(X)+X)(2F_{m-3}(X)F_{m-3}'(X)+1)+1 \\
 &\equiv 2X+1\pmod {F_{m-3}(X)},
 \end{align*}
 to get that
 $$
 X((X^2+X)^2+2X)\equiv (X+2)(X^2+X-2X(2X+1))^2 \pmod{F_{m-3}(X)}.
 $$
 This leads to 
\begin{equation}
\label{eq:cong}
 8X^5+22X^4+12X^3\equiv 0\pmod {F_{m-3}(X)},
\end{equation}
 which is impossible when $m\ge 5$. Indeed, for $k \ge 0$ we obviously have 
$F_k(X) \equiv X \pmod {X^2}$, so~\eqref{eq:cong} implies 
$$
 8X^3+22X^2+12X\equiv 0\pmod {F_{m-3}(X)}
$$
(see also~\eqref{eq:congX3}), which is impossible as $m\ge 5$  we have
 $$
 \deg(8X^3+22X^2+12X) = 3 < 2^{m-3}=\deg F_{m-3}(X),
 $$
so~\eqref{eq:deg B f le} is impossible. Thus, \eqref{eq:deg B f >} 
holds and we have the desired result for such values of $m$.
 
Finally, in order to finish the proof, it remains to handle the cases $m=3,4$. We need to show 
that for $m=3,4$, the polynomial $G_{m,-1}(X)$ 
 has a simple root which is not a root of $G_{k,-1}(X)$ for any proper divisor $k$ of $m$. For this we treat such $G_{m,-1}(X)$ as polynomials with integer coefficients and compute their discriminants. Notice that only the prime factors $\ell$ of such discriminants can be characteristics of fields where $G_{m,-1}(X)$ has double roots. Then, we factor such 
 $G_{m,-1}(X)$ over the corresponding finite fields $\F_\ell$ for such primes $\ell$, and can see that 
 there is always  an irreducible factor of $G_{m,-1}(X)$ with multiplicity $1$ which is not a factor of any
 $G_{k,-1}(X)$ ($k|m, k<m$) over $\F_\ell$. This completes the proof. 
\end{proof}

In the case when $d > 2$ we can study the arithmetic structure
of the polynomials  $G_{k,\gamma}$ by using Mason's theorem~\cite[page~156, Corollary]{Mason}, which
gives a polynomial version of the $ABC$-conjecture (also called the Mason-Stothers theorem
since it has  also been discovered independently by Stothers~\cite[Theorem~1.1]{Stothers}, see also~\cite{Sny}).
We present it in Lemma~\ref{lem:ABC} below.

For a  polynomial $F \in \F_q[X]$ we use $\rad(F)$ to denote the
product of all monic irreducible divisors of $F$.

\begin{lemma}
\label{lem:ABC}
Let $A$, $B$, $C$ be nonzero polynomials in $\F_q[X]$
satisfying $A+B+C = 0$ and $\gcd\(A, B, C\)= 1$. If
$\deg A \ge \deg \rad (ABC)$, then $A'=B'=C'= 0$.
\end{lemma}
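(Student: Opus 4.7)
The plan is to follow the standard Mason--Stothers strategy via the Wronskian, adapted to positive characteristic. First I would observe that the coprimality hypothesis together with $A+B+C=0$ forces $A,B,C$ to be \emph{pairwise} coprime: any common irreducible factor of two of them would, via the relation $A+B+C=0$, divide the third, contradicting $\gcd(A,B,C)=1$. Consequently
\[
\rad(ABC) \;=\; \rad(A)\,\rad(B)\,\rad(C),
\]
with the three factors on the right pairwise coprime.

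Next I would introduce the Wronskian $W := AB' - A'B$. Differentiating $A+B+C=0$ gives $A'+B'+C'=0$, from which a short substitution yields the alternative expressions
\[
W \;=\; AB' - A'B \;=\; A'C - AC' \;=\; BC' - B'C.
\]
For any irreducible factor $\pi$ of $A$ of multiplicity $k\ge 1$, the power $\pi^{k-1}$ divides both $A$ and $A'$, hence divides $W$; therefore $A/\rad(A)$ divides $W$. The same reasoning, applied with the two other expressions for $W$, shows that $B/\rad(B)$ and $C/\rad(C)$ also divide $W$. Since the three quotients are pairwise coprime, their product
\[
\frac{ABC}{\rad(ABC)}
\]
divides $W$.

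Now I would split on whether $W$ is zero. If $W\neq 0$, then comparing degrees gives
\[
\deg(ABC) - \deg\rad(ABC) \;\le\; \deg W \;\le\; \deg A + \deg B - 1,
\]
so $\deg C \le \deg\rad(ABC)-1$, and the symmetric argument (using the other two forms of $W$) yields the same bound for $\deg A$ and $\deg B$. In particular $\deg A < \deg \rad(ABC)$, contradicting the hypothesis. Hence $W=0$.

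The final, and only characteristic-$p$-specific, step is to extract $A'=B'=C'=0$ from $W=AB'-A'B=0$. Here $(A/B)' = W/B^2 = 0$ in $\F_q(X)$, so $A/B$ lies in the kernel of $d/dX$, i.e.\ in $\F_q(X^p)$; writing $A/B$ in lowest terms (possible since $\gcd(A,B)=1$) this means $A,B\in\F_q[X^p]$, whence $A'=B'=0$. Differentiating $A+B+C=0$ then gives $C'=0$, completing the proof. The main (and only non-routine) obstacle is the last paragraph, since in characteristic $p$ the vanishing of the Wronskian does not immediately force both polynomials to be constant; but the kernel description of $d/dX$ on $\F_q(X)$ handles it cleanly.
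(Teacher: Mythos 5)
Your proof is correct, but note that the paper does not actually prove this statement: it is quoted as Mason's theorem (the Mason--Stothers theorem), with references to Mason, Stothers and Snyder, and the version stated is precisely the characteristic-$p$ form in which the ``degenerate'' conclusion is $A'=B'=C'=0$ rather than a degree bound. What you have written is essentially the standard Wronskian proof from that literature (closest to Snyder's): pairwise coprimality from $A+B+C=0$, the three equal expressions $W=AB'-A'B=A'C-AC'=BC'-B'C$, the divisibility $ABC/\rad{(ABC)}\mid W$, the degree comparison forcing $W=0$ under the hypothesis $\deg A\ge \deg\rad{(ABC)}$ (indeed the form $W=BC'-B'C$ alone already gives $\deg A\le \deg\rad{(ABC)}-1$, the needed contradiction), and finally the extraction of $A'=B'=C'=0$ from $W=0$. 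All steps check out, including the characteristic-$p$ endgame; if you want to avoid invoking the kernel description $\ker(d/dX)=\F_q(X^p)$ and the lowest-terms argument, there is a shorter route: from $AB'=A'B$ and $\gcd(A,B)=1$ you get $A\mid A'$, which forces $A'=0$ since $\deg A'<\deg A$ or $A'=0$; similarly $B'=0$, and then $C'=-(A'+B')=0$. So your proposal supplies a complete, self-contained proof of a lemma the paper only cites, at the cost of reproving a classical result; the paper's choice buys brevity, yours buys self-containedness, and both rest on the same underlying Wronskian mechanism.
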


We are now ready to prove our main technical statement that
we use for $d \ge 3$ which asserts that some
general products of polynomials $ G_{k,\gamma}$ over distinct  integers
are not perfect $e$-th powers.

\begin{lemma}
\label{lem:e pow}
Suppose that $\gcd(d-1,q)=1$. Then, for $d\ge 3$, $e=\gcd(d,q-1) \ge 2$, any $J\ge 2$ and  any collection of integers not all equal to zero
$$
\cA = \{\alpha_{j,\gamma} \in \{0, \ldots, e-1\}~:~2\le j \le J, \ \gamma \in \tGf\},
$$
 we have
$$
 \prod_{j=2}^J\, \prod_{\gamma \in \tGf} G_{j,\gamma}^{\alpha_{j,\gamma}}  \ne P^e
$$
for any polynomial $P \in \bFq[X]$.
\end{lemma}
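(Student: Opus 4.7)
The plan is to argue by contradiction. Suppose that
\[
\prod_{j=2}^J \prod_{\gamma \in \tGf} G_{j,\gamma}^{\alpha_{j,\gamma}} = P^e
\]
in $\bFq[X]$ for some $P$. The first step is to decouple the product across different values of $\gamma$. Since $F_k(0)=0$ for all $k\ge 0$, every $G_{k,\gamma}$ factors as $G_{k,\gamma} = X\,\tilde G_{k,\gamma}$ with $\tilde G_{k,\gamma}(0) = \gamma - 1 \ne 0$. Iterating Lemma~\ref{lem:cong} in the manner of the proof of Lemma~\ref{lem:gcd} shows that for distinct $\gamma,\delta \in \tGf$ the Euclidean reduction of $\gcd(G_{m,\gamma},G_{k,\delta})$ terminates at $\gcd(G_{a,\gamma},G_{0,\delta})$ with $G_{0,\delta}(X) = (\delta-1)X$, so $\gcd(\tilde G_{m,\gamma},\tilde G_{k,\delta}) = 1$. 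Consequently, the polynomials $\tilde G_{j,\gamma}$ for different values of $\gamma$ are pairwise coprime, and being an $e$-th power forces each $\gamma$-block $\prod_j \tilde G_{j,\gamma}^{\alpha_{j,\gamma}}$ to be an $e$-th power separately. Since not all $\alpha_{j,\gamma}$ vanish, we fix $\gamma$ with some $\alpha_{j,\gamma}\ne 0$ and let $J\ge 2$ be the largest such index.

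The goal now is to produce a simple irreducible factor $\pi$ of $\tilde G_{J,\gamma}$ (in $\bFq[X]$) that does not divide $\tilde G_{k,\gamma}$ for any $k<J$. Once $\pi$ is found, its multiplicity in $\prod_j \tilde G_{j,\gamma}^{\alpha_{j,\gamma}}$ is exactly $\alpha_{J,\gamma}\in\{1,\dots,e-1\}$, contradicting divisibility by $e$. To locate $\pi$, write $F_k = X H_k$ where $H_0 = 1$ and $H_k = X^{d-1}H_{k-1}^d + 1$, so that $\tilde G_{k,\gamma} = \gamma X^{d-1}H_{k-1}^d + (\gamma-1)$ and $\deg \tilde G_{k,\gamma} = d^k - 1$. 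Apply Lemma~\ref{lem:ABC} to the identity
\[
\gamma X^{d-1} H_{J-1}^d \;-\; \tilde G_{J,\gamma} \;+\; (\gamma-1) \;=\; 0,
\]
with $A = \gamma X^{d-1}H_{J-1}^d$, $C = -\tilde G_{J,\gamma}$, and the nonzero constant $B = \gamma - 1$. Since $B$ is a nonzero constant, $\gcd(A,B,C)=1$; and the lowest-order term $\gamma X^{d-1}$ of $A$ together with the hypothesis $\gcd(d-1,q)=1$ produces the nonzero term $\gamma(d-1)X^{d-2}$ in $A'$, so $A' \ne 0$. Lemma~\ref{lem:ABC} then yields $\deg A < \deg \rad(ABC) \le d^{J-1} + \deg\rad \tilde G_{J,\gamma}$, whence $\deg \tilde G_{J,\gamma} - \deg \rad \tilde G_{J,\gamma} \le d^{J-1}-1$.

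The final step is a degree count. Writing $\tilde G_{J,\gamma} = \prod \pi_i^{e_i}$, the above bound gives $\sum_{e_i\ge 2}\deg \pi_i \le d^{J-1}-1$, so the total degree of the simple irreducible factors of $\tilde G_{J,\gamma}$ is at least $\deg \rad \tilde G_{J,\gamma} - (d^{J-1}-1) \ge d^J - 2d^{J-1} + 1$. By Lemma~\ref{lem:gcd}, any irreducible common to $\tilde G_{J,\gamma}$ and some $\tilde G_{k,\gamma}$ with $k<J$ must divide $\prod_{k\mid J,\, k<J}\tilde G_{k,\gamma}$, whose degree is bounded by $\sum_{k\mid J,\,k<J}(d^k-1)$. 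It then suffices to check that $d^J - 2d^{J-1} + 1$ strictly exceeds this latter sum for all $J\ge 2$ and $d\ge 3$: for $J=2$ this is the inequality $(d-1)^2>d-1$, while for $J\ge 3$ the geometric-series estimate $\sum_{k=1}^{J/s}d^k \le d^{J/s+1}/(d-1)$ (with $s$ the smallest prime divisor of $J$) combined with $J - J/s - 2 \ge 0$ and $d-2\ge 1$ gives the bound. The main obstacle is precisely this small-case verification: the asymptotic comparison breaks down at $J=2$, so the sharper identity $(d-1)^2>d-1$ for $d\ge 3$ must be invoked directly before the argument becomes uniform for $J\ge 3$.
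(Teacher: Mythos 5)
Your argument is correct, and it takes a genuinely different route from the paper's proof of Lemma~\ref{lem:e pow}. Both proofs rest on the same two ingredients, namely the congruence/gcd structure of the $G_{k,\gamma}$ (Lemmas~\ref{lem:cong} and~\ref{lem:gcd}) and a Mason-based radical bound $\deg\rad{(G^*_{k,\gamma})}\ge (d-1)d^{k-1}$ (your inequality $\deg \tilde G_{k,\gamma}-\deg\rad{\tilde G_{k,\gamma}}\le d^{k-1}-1$ is the same statement, your $\tilde G$ being the paper's $G^*$), and both use $\gcd(d-1,q)=1$ exactly where you do, to make the relevant derivative in Lemma~\ref{lem:ABC} nonzero. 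The paper, however, keeps the whole product together: it passes to the complementary exponents $\widetilde Q_{J,\cA}$ so that the minimal top-level exponent is at most $e/2$, and then squeezes the degree of a gcd between the top-level factor and the lower-level part between the lower bound~\eqref{eq: deg H} and the upper bound~\eqref{eq:gcd Small}, with separate cases $k=2$, $d>3$ and $d=3$. You instead decouple the product across $\gamma$ via the stronger coprimality $\gcd(G_{m,\gamma},G_{k,\delta})=X$ for $\gamma\ne\delta$ at arbitrary levels (the paper records only the same-level case~\eqref{eq:gcd X}, but your statement does follow by iterating Lemma~\ref{lem:cong}), and then exhibit a simple irreducible factor of the top-level $\tilde G_{k,\gamma}$ dividing no lower level, so that its multiplicity in the product is $\alpha_{k,\gamma}\not\equiv 0 \pmod e$; this is precisely the strategy of the paper's $d=2$ result (Lemma~\ref{lem:sqr}) transported to $d\ge 3$ with Mason replacing the explicit computations, and since irreducibles over the perfect field $\F_q$ are separable there is no multiplicity subtlety in passing to $\bFq$. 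What your route buys is uniformity: no complementary-exponent trick and no case split on $d$, the only remaining verification being $(d-2)d^{k-1}+1>\sum_{a\mid k,\, a<k}(d^a-1)$, which you check correctly for $k=2$ (where it reads $(d-1)^2>d-1$) and for $k\ge 3$ via $k/s+1\le k-1$. Two details to polish: the Euclidean reduction licensed by Lemma~\ref{lem:cong} (positive indices only) stops at equal indices $g=\gcd(m,k)$, not at index $0$, and there one concludes with the identity $G_{g,\gamma}-\gamma\delta^{-1}G_{g,\delta}=(\gamma\delta^{-1}-1)X$ (the constant is not $\delta-1$) or with the common-root count as in~\eqref{eq:gcd X}, the conclusion $\gcd(\tilde G_{m,\gamma},\tilde G_{k,\delta})=1$ being unaffected; and you should not reuse the letter $J$ for the largest index carrying a nonzero exponent in the chosen $\gamma$-block, since $J$ already denotes the global range bound.
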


\begin{proof}
Clearly,
we observe that for $j=1,2, \ldots$ we have $X \mid G_{j,\gamma}$ but $X^2 \nmid G_{j,\gamma}$ for any $\gamma \in \tGf$.
Hence, define
$$
G_{j,\gamma}^*= G_{j,\gamma}/X, \qquad j=1,2, \ldots, \   \gamma  \in \tGf.
$$
By counting the common roots, for  distinct  $\gamma, \delta \in \tGf$ we have
\begin{equation}
\label{eq:gcd X}
\gcd(G_{j,\gamma}^*, G_{j,\delta}^*)=1.
\end{equation}
Therefore, applying Lemma~\ref{lem:cong}, for any positive integers $k$
and $h$ and $\delta \in \tGf$ we obtain
\begin{equation}
\label{eq:gcd kh k}
\begin{split}
\deg \gcd\(G_{k+h,\delta}^*,\prod_{\gamma \in \tGf} G_{k,\gamma} \)& =
\deg \gcd\(G_{k+h,\delta}^*,\prod_{\gamma \in \tGf} G_{k,\gamma}^*  \) \\
 = \deg &\gcd\(G_{h,\delta}^*,\prod_{\gamma \in \tGf} G_{k,\gamma}^* \)
\le d^h -1 .
\end{split}
\end{equation}
For any $k\ge 1$ and $\gamma \in \tGf$, since $X\mid F_{k-1}$ and  $\gcd(d-1,q)=1$, we get that $(F_{k-1}^d/X)^{'}\ne 0$; then applying Lemma~\ref{lem:ABC} with
$A=-G_{k,\gamma}^*$, $B =\gamma  F_{k-1}^d/X$ and $C= \gamma-1$,
we derive
\begin{equation*}
\begin{split}
d^k-1 < \deg\rad\(G_{k,\gamma}^* F_{k-1}^d/X\)
&= \deg\rad\(G_{k,\gamma}^* F_{k-1}\)\\
& \le \deg\rad\(G_{k,\gamma}^*\) + d^{k-1}.
\end{split}
\end{equation*}
Thus,
\begin{equation}
\label{eq: deg rad}
 \deg \rad\(G_{k,\gamma}^*\) \ge (d-1)  d^{k-1}, \quad k =1,2, \ldots, \ \gamma
\in \tGf.
\end{equation}
Denote
$$
Q_{J,\cA} = \prod_{j=2}^J\, \prod_{\gamma \in \tGf} G_{j,\gamma}^{\alpha_{j,\gamma}},
$$
and assume that $Q_{J,\cA}= P^e$ for some $P \in \bFq[X]$. Let
$$
\widetilde Q_{J,\cA} = \prod_{j=2}^J\, \prod_{\gamma \in \tGf}
G_{j,\gamma}^{\widetilde \alpha_{j,\gamma}},
$$
where $\widetilde\alpha_{j,\gamma}=e-\alpha_{j,\gamma}$ if
$\alpha_{j,\gamma}\neq0$ and $\widetilde\alpha_{j,\gamma}=0$ otherwise.
Then, since each $\alpha_{j,\gamma}\le e-1$, we have
$$P\mid \prod_{j=2}^J\, \prod_{\gamma \in \tGf,\,\alpha_{j,\gamma}\neq0}
G_{j,\gamma}.$$
Noticing that
$$\widetilde Q_{J,\cA}=\left(\frac
{\prod_{j=2}^J\prod_{\gamma \in \tGf,\,\alpha_{j,\gamma}\neq0}G_{j,\gamma}}P
\right)^e,$$
we conclude that $\widetilde Q_{J,\cA}$ is also a perfect $e$-th power.
Let $k\ge 2$ be the largest $j \in \{2, \ldots, J\}$ for which one
of the integers $\alpha_{j,\gamma}$, $\gamma \in \tGf$ is
positive.
Considering, if necessary, $\widetilde Q_{J,\cA}$ we can always assume that
$$
\alpha = \min_{\gamma \in \tGf}  \{\alpha_{k,\gamma}~:~\alpha_{k,\gamma}>0\} \le e/2.
$$
We now fix some $\delta \in  \tGf$ with $\alpha_{k,\delta}  = \alpha$,
$1\le\alpha\le e/2$.
If a polynomial $H \in \F_q[X]$ is such that the product
$H (G_{k,\delta}^*)^{\alpha_{k,\delta}}$ is a perfect $e$-th power, then
$$
 \rad(G_{k,\delta}^*)^e \mid H (G_{k,\delta}^*)^{\alpha_{k,\delta}}.
$$
So,
$$
\rad(G_{k,\delta}^*)^{e-\alpha_{k,\delta}}
\mid H \(G_{k,\delta}^*/\rad(G_{k,\delta}^*)\)^{\alpha_{k,\delta}},
$$
which, combining with~\eqref{eq: deg rad}, implies that
\begin{equation}
\label{eq: deg H}
\begin{split}
 \deg & \gcd\((G_{k,\delta}^*)^{e-\alpha_{k,\delta}}, H\)\\
& \ge\deg\gcd\(\rad(G_{k,\delta}^*)^{e-\alpha_{k,\delta}}, H\)\\
& \ge(e-\alpha_{k,\delta})\deg\rad(G_{k,\delta}^*)
-\alpha_{k,\delta}\(\deg G_{k,\delta}^*-\deg\rad(G_{k,\delta}^*)\)\\
& \ge e(d-1)d^{k-1}-\alpha_{k,\delta}(d^k-1) \ge \frac{e}{2} (d-2) d^{k-1}.
\end{split}
\end{equation}

If $k=2$, that is $J=2$ and 
$$
Q_{J,\cA}=\prod_{\gamma \in \tGf}G_{k,\gamma}^{\alpha_{k,\gamma}}=P^e,
$$
then by~\eqref{eq: deg H}, we obtain
$$
\deg \gcd\((G_{k,\delta}^*)^{e-\alpha_{k,\delta}}, Q_{J,\cA}/\(G_{k,\delta}^*\)^{\alpha_{k,\delta}}\) \ge \frac{e}{2} (d-2) d^{k-1}>0.
$$
However, combining~\eqref{eq:gcd X} with $x\nmid G_{k,\delta}^*$, we have
$$
\deg \gcd\((G_{k,\delta}^*)^{e-\alpha_{k,\delta}}, Q_{J,\cA}/\(G_{k,\delta}^*\)^{\alpha_{k,\delta}}\) =0,
$$
which leads to a contradiction. So, we must have $k\ne 2$, that is $k\ge 3$.

Now, combining~\eqref{eq:gcd X} with~\eqref{eq: deg H}, we have
\begin{equation}
\label{eq:gcd Large}
\begin{split}
 \deg & \gcd\(\(G_{k,\delta}^*\)^{e-\alpha_{k,\delta}},
\prod_{j=2}^{k-1}\, \prod_{\gamma \in \tGf}G_{j,\gamma}^{\alpha_{j,\gamma}}\)\\
& = \deg \gcd\(\(G_{k,\delta}^*\)^{e-\alpha_{k,\delta}},
Q_{J,\cA}/\(G_{k,\delta}^*\)^{\alpha_{k,\delta}}\)\\
& \ge \frac{e}{2} (d-2) d^{k-1}.
\end{split}
\end{equation}
On the other hand, we deduce that
\begin{equation*}
\begin{split}
\deg & \gcd \(\(G_{k,\delta}^*\)^{e-\alpha_{k,\delta}},  \prod_{j=2}^{k-1}\, \prod_{\gamma \in \tGf}
G_{j,\gamma}^{\alpha_{j,\gamma}}\) \\
& \le
\deg \gcd\(\(G_{k,\delta}^*\)^{e-\alpha_{k,\delta}},  \prod_{\gamma \in \tGf}
G_{k-1,\gamma}^{\alpha_{k-1,\gamma}}\) + \sum_{j=2}^{k-2}\sum_{\gamma \in \tGf}
\alpha_{j,\gamma} \deg G_{j,\gamma}\\
& \le
 \deg \gcd\(\(G_{k,\delta}^*\)^{e-1},  \prod_{\gamma \in \tGf}
G_{k-1,\gamma}^{e-1}\) + (e-1)^2 \sum_{j=2}^{k-2}  d^j.
\end{split}
\end{equation*}
Using~\eqref{eq:gcd kh k} (with $h=1$) we get
$$
\deg \gcd \(\(G_{k,\delta}^*\)^{e-1},  \prod_{\gamma \in \tGf}
G_{k-1,\gamma}^{e-1}\) \le(e-1)(d-1).
$$
Collecting the above estimates,
we obtain
\begin{equation}
\label{eq:gcd Small}
\begin{split}
 \deg & \gcd \(\(G_{k,\delta}^*\)^{e-\alpha_{k,\delta}},
\prod_{j=2}^{k-1}\prod_{\gamma \in \tGf}
G_{j,\gamma}^{\alpha_{j,\gamma}}\) \\
& \qquad \le (e-1)(d-1) + (e-1)^2 \frac{d^{k-1}-1}{d-1}.
\end{split}
\end{equation}
It is now easy to verify  that~\eqref{eq:gcd Small}
contradicts~\eqref{eq:gcd Large} when $d>3$. Indeed, combining~\eqref{eq:gcd Large} with~\eqref{eq:gcd Small}, we get
$$
\frac{e}{2} (d-2) d^{k-1}\le (e-1)(d-1) + (e-1)^2 \frac{d^{k-1}-1}{d-1}.
$$
Then, since $d>3$, we can get
$$
e(d-1) d^{k-1}\le (e-1)(d-1)^2 + (e-1)^2d^{k-1}-(e-1)^2,
$$
and thus
$$
(e-1)d^{k-1} \le (e-1)(d-1)^2-(e-1)^2,
$$
which is impossible by noticing that $k\ge 3$.

Finally, we take
$d=3$. Then, $e=3$. From~\eqref{eq: deg H}, we have
\begin{equation*}
\begin{split}
  \deg & \gcd \(\(G_{k,\delta}^*\)^{e-\alpha_{k,\delta}},
\prod_{j=2}^{k-1}\, \prod_{\gamma \in \tGf}
G_{j,\gamma}^{\alpha_{j,\gamma}}\) \\
& \ge e(d-1)d^{k-1}-\alpha_{k,\delta}(d^k-1) \ge 3^k+1,
\end{split}
\end{equation*}
which contradicts~\eqref{eq:gcd Small}.
The desired result now follows.
\end{proof}

Let $\cX_e$ be the group of all multiplicative characters of $\F_q^*$
of order $e$, that is, characters $\chi$
with $\chi^e = \chi_0$, where $\chi_0$ is the principal character.
We also define $\cX_e^* = \cX_e \setminus \{\chi_0\}$.

We recall the following special case of the Weil bound of character sums
(see~\cite[Theorem~11.23]{IwKow}).

\begin{lemma}
\label{lem:Weil}
For any polynomial $Q(X) \in \F_q[X]$ with  $Z$ distinct zeros
in  $\bFq$ and
which is not a perfect $e$-th power in   the ring of polynomials
over $\bFq$,
 and  $\chi \in \cX_e^* $, we have
$$
\left| \sum_{a \in \F_q}\chi\(Q(a)\)\right| \le Z q^{1/2}.
$$
\end{lemma}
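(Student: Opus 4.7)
This is a classical theorem of Weil and is quoted here from \cite[Theorem~11.23]{IwKow}; accordingly, the plan in the paper is to invoke the reference rather than reprove it. For a proof sketch, I would follow the standard route that passes from the character sum to a rational point count on the superelliptic curve $\cC\colon y^e=Q(x)$ over $\bFq$.

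First, by orthogonality of multiplicative characters of order dividing $e$, whenever $Q(a)\ne 0$ one has
\[
\sum_{\chi\in\cX_e}\chi(Q(a)) \;=\; \#\{y\in\F_q: y^e=Q(a)\}.
\]
Summing this over $a\in\F_q$ and isolating the principal character expresses $\sum_{\chi\in\cX_e^*}\sum_{a}\chi(Q(a))$ as $\#\cC(\F_q)-q$, up to a bounded error coming from the at most $Z$ zeros of $Q$ in $\F_q$ and from the behaviour at infinity. The crucial use of the hypothesis that $Q$ is not a perfect $e$-th power in $\bFq[X]$ is that it is exactly the condition under which $\cC$ is geometrically irreducible, so that the Weil bound
\[
|\#\cC(\F_q)-(q+1)| \;\le\; 2g\, q^{1/2}
\]
applies to the smooth projective model of $\cC$, where $g$ is its geometric genus.

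Next, I would estimate $g$ by Riemann--Hurwitz applied to the degree-$e$ cover $\cC\to\mathbb{P}^1$ defined by projection to $x$. This cover is ramified only over the distinct zeros of $Q$ and possibly the point at infinity, which produces a bound of the form $g\le \tfrac12(e-1)Z$ (with slack depending on whether the multiplicities of the zeros are divisible by $e$). Combining these ingredients gives the bound for the total sum $\sum_{\chi\in\cX_e^*}\sum_a\chi(Q(a))$; to obtain the claimed estimate for a single non-trivial $\chi$, one decomposes the $\ell$-adic cohomology $H^1_{\text{\'et}}(\cC,\mathbb{Q}_\ell)$ into isotypic components under the natural $\Gamma_e$-action and applies Deligne's purity to each eigenspace separately.

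The main obstacle, and the delicate point that distinguishes this version from the cruder bound $(\deg Q-1)q^{1/2}$, is the sharp genus estimate: zeros of $Q$ whose multiplicity is divisible by $e$ contribute trivially to the ramification (indeed they can be pulled inside the $e$-th power), whereas multiplicities coprime to $e$ contribute fully. Tracking this carefully is what allows the $\deg Q$ in the naive Weil bound to be replaced by the number $Z$ of \emph{distinct} zeros. Since all of this is standard and is carried out in detail in \cite[Theorem~11.23]{IwKow}, I would simply cite that result in the paper and proceed.
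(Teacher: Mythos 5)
Your proposal matches the paper exactly: the paper offers no proof of this lemma, merely recalling it as a special case of the Weil bound with the citation \cite[Theorem~11.23]{IwKow}, which is precisely what you propose to do. Your accompanying sketch via the superelliptic curve $y^e=Q(x)$, Riemann--Hurwitz, and the character-isotypic decomposition is a reasonable outline of the standard argument behind the cited result, so nothing further is needed.
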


We are now ready to establish a lower bound on $N_d(q)$.
\begin{theorem}
\label{thm:Nd bound L}
Suppose that $\gcd(d-1,q)=1$. Then, for any $d\ge2$ and $e=\gcd(d,q-1) \ge 2$, we have
$$
N_d(q) \ge q^{\rho_{d,e} + o(1)}
$$
as $q \to \infty$, where
$$
\rho_{d,e}= \frac{1}{2(e- 1 + \log d/\log e)}.
$$
\end{theorem}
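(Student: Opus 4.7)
The plan is to show that for a suitable $J=J(q)$ growing like $\log q$, every tuple $(n_2,\dots,n_J)\in\{0,1,\dots,e-1\}^{J-1}$ is realised as the leaf-count pattern $(\eta_2(a),\dots,\eta_J(a))$ of some $a\in\F_q^*$. By the discussion preceding the theorem (where two graphs $\cH_a$ and $\cH_b$ can be isomorphic only if the corresponding patterns agree), this immediately gives $N_d(q)\ge e^{J-1}$. Optimising $J$ against the Weil error will deliver the exponent $\rho_{d,e}$.

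The first step is to rewrite the pattern-indicator as a character sum. Recall that $\gamma f_a^{(j)}(a)-a=\gamma F_j(a)-a=G_{j,\gamma}(a)$, and that the node $\gamma f_a^{(j)}(a)$ is a leaf iff $G_{j,\gamma}(a)$ is a non-zero $e$-th power non-residue. Writing the indicator of the $e$-th-power-residue set through the identity
\[
P_e(y)=\delta_{y,0}+\frac{1}{e}\sum_{\chi\in\cX_e}\chi(y)\qquad(\chi(0):=0),
\]
expanding over all choices of the subset $S_j\subseteq\tGf$ of size $n_j$ where leaves occur at level $j$, and expanding each factor into a sum over $\chi\in\cX_e$, one obtains
\[
\#\{a\in\F_q^*:\eta_j(a)=n_j,\ 2\le j\le J\}
\;=\;\mathrm{Main}(n_2,\dots,n_J)\;+\;\mathrm{Err},
\]
where, up to $O(q^{1/2})$ contributions from the zero set of $\prod G_{j,\gamma}$, the main term is
\[
\frac{q}{e^{(e-1)(J-1)}}\prod_{j=2}^{J}\binom{e-1}{n_j}(e-1)^{n_j}\;\gg\;q\,e^{-(e-1)(J-1)},
\]
and $\mathrm{Err}$ is a sum of $O(e^{(e-1)(J-1)})$ terms of the form
\[
e^{-(e-1)(J-1)}\sum_{a\in\F_q}\chi\!\left(\prod_{j=2}^{J}\prod_{\gamma\in\tGf}G_{j,\gamma}(a)^{\alpha_{j,\gamma}}\right),
\qquad\alpha_{j,\gamma}\in\{0,\dots,e-1\},
\]
with at least one $\alpha_{j,\gamma}\neq 0$.

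For each such non-trivial exponent tuple, Lemma~\ref{lem:e pow} (and Lemma~\ref{lem:sqr} in the residual case $d=e=2$) guarantees that the argument of $\chi$ is not a perfect $e$-th power in $\bFq[X]$; hence Lemma~\ref{lem:Weil} bounds the character sum by
\[
Z\,q^{1/2}\le (e-1)\sum_{j=2}^{J}d^j\,q^{1/2}\;\le\;C_e\,d^{J}\,q^{1/2}.
\]
Multiplying by the number of error terms and the normalising factor, the total error is
\[
|\mathrm{Err}|\;\ll\;e^{(e-1)(J-1)}\cdot d^{J}\,q^{1/2}.
\]
Comparing with the lower bound on the main term, every pattern is realised as soon as
\[
q\;\ge\;C\,e^{2(e-1)(J-1)}\,d^{2J},\qquad\text{i.e.}\qquad
J\;\le\;\frac{(1+o(1))\log q}{2\bigl((e-1)\log e+\log d\bigr)}.
\]
Taking $J$ as large as this permits and using $N_d(q)\ge e^{J-1}$, we obtain
\[
\log N_d(q)\;\ge\;\frac{(1+o(1))\log e}{2\bigl((e-1)\log e+\log d\bigr)}\log q
\;=\;\bigl(\rho_{d,e}+o(1)\bigr)\log q,
\]
as required.

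The main obstacle is the book-keeping around the character-sum expansion: one must make sure that the exponent vectors produced by the expansion are exactly those covered by Lemma~\ref{lem:e pow}, and that the degree bound $Z\ll d^J$ together with the counting of error terms is sharp enough to recover the constant $(e-1)\log e+\log d$ in the denominator of $\rho_{d,e}$; any slack here would weaken the exponent. The delicate content of Lemma~\ref{lem:e pow} is precisely what makes this step go through for all admissible $d$ and $e$.
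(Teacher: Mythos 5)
Your plan is essentially the paper's own proof: the paper likewise realises all leaf-count patterns along the path from $a$ by detecting $e$-th power residue classes of $G_{j,\gamma}(a)$ with characters, invokes Lemma~\ref{lem:sqr} (for $d=e=2$) and Lemma~\ref{lem:e pow} (for $d\ge3$) so that Lemma~\ref{lem:Weil} applies with $Z=O(d^{J})$ zeros, and chooses $J$ of size roughly $\tfrac{\log q}{2\log(de^{e-1})}$ to get $N_d(q)\ge e^{J-1}=q^{\rho_{d,e}+o(1)}$; the only (cosmetic) difference is that the paper prescribes coset representatives $\sigma_{j,\gamma}$ of $\F_q^*/\Delta_e$ rather than summing over subsets realising each $\eta_j=n_j$. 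Your displayed bound on $\mathrm{Err}$ carries a superfluous factor $e^{(e-1)(J-1)}$ given that you already put the normalising factor inside each term, but the threshold $q\gg e^{2(e-1)(J-1)}d^{2J}$ you then impose is the correct one and yields the stated exponent.
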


\begin{proof}  We define $J$ by the inequalities
$$
(de^{e-1})^{J+1} \le q^{1/2} /\log q <(de^{e-1})^{J+2} .
$$
Note that for the fixed $d$,  we  have $J\ge 2$ when $q$ is sufficiently large.
For each $j =2, \ldots, J$ and $\gamma \in \tGf$ we
choose a representative $\sigma_{j,\gamma}$  of the quotient
group $\F_q^*/\Delta_e$,
where $\Delta_e$ is the group of $e$-th powers,
and consider the collection
$$
\bsigma = \{\sigma_{j,\gamma}~:~ j =2, \ldots, J, \ \gamma \in \tGf\}.
$$
Let $A(\bsigma)$  denote  the number of $a \in \F_q^*$
such that
$$
\gamma f_a^{(j)}(a)-a \in \sigma_{j,\gamma}  \Delta_e,
\qquad \textrm{for all $j =2, \ldots, J, \ \gamma \in \tGf$}.
$$
Clearly, if for any $\bsigma$ as in the above we have
\begin{equation}
\label{eq:A pos}
A(\bsigma) > 0,
\end{equation}
then the vector~\eqref{eq:vect} takes all $e^{J-1}$ possible values and
thus we have
$$
N_d(q) \ge e^{J-1},
$$
which implies the desired result. So, it remains to prove~\eqref{eq:A pos} for sufficiently large $q$ and the above choice of $J$.

Let $\chi$ be a primitive character of order $e$
(that is, a generator of $\cX_e$).
We can now express $A(\bsigma)$ via the following character sums
$$
A(\bsigma)  =  \sum_{a \in \F_q^*} \frac{1}{e^{(e-1)(J-1)}} \prod_{j=2}^J\,
\prod_{\gamma \in \tGf} \,  \sum_{\alpha_{j,\gamma}=0}^{e-1}
\chi^{\alpha_{j,\gamma}}\(G_{j,\gamma}(a)/\sigma_{j,\gamma}\),
$$
which follows directly from $G_{j,\gamma}(a)=\gamma f_a^{(j)}(a)-a$ and the following orthogonality relations of characters by noticing that $\chi$ is of order $e$ and is a generator of $\cX_e$
(see, for example the orthogonality relations in~\cite[Section~3.1]{IwKow}):
\begin{equation}
\sum_{\alpha_{j,\gamma}=0}^{e-1}
\chi^{\alpha_{j,\gamma}}\(G_{j,\gamma}(a)/\sigma_{j,\gamma}\)
= \left\{\begin{array}{ll}
e & \text{if $G_{j,\gamma}(a)/\sigma_{j,\gamma}\in \Delta_e$,}\\
0 &\text{otherwise.}
\end{array}
\right.
\notag
\end{equation}
Expanding the product, and changing the order of summation we obtain
$e^{(e-1)(J-1)}$ character sums parametrized by different
choices of $\alpha_{j,\gamma}\in \{0, \ldots, e-1\}$,
$j = 2, \ldots, J$, $\gamma \in \tGf$.

Note that  $\chi^{0}\(G_{j,\gamma}(a)/\sigma_{j,\gamma}\)=1$ if and only if $G_{j,\gamma}(a)\ne 0$, and each polynomial $G_{j,\gamma}$ has degree  $d^j$. So,
by estimating the number of distinct zeros of the
polynomial $\prod_{j=2}^J\prod_{\gamma \in \tGf} G_{j,\gamma}$,
the term corresponding to  the choice
$\alpha_{j,\gamma}=0$, $j = 2, \ldots, J$, and $\gamma \in \tGf$,
is not less than $q - 1-(e-1)d^{J+1}$.

For the other terms, using
Lemma~\ref{lem:sqr} (if $d=2$) and Lemma~\ref{lem:e pow}  (if $d\ge 3$),
we apply Lemma~\ref{lem:Weil} to each of them by noticing that each corresponding polynomial has at most $(e-1)d^{J+1}$ distinct zeros. Hence, we obtain
\begin{align*}
A(\bsigma) &\ge \frac{q-1-(e-1)d^{J+1}-(e^{(e-1)(J-1)}-1)(e-1)d^{J+1} q^{1/2}}{e^{(e-1)(J-1)}} \\
&\ge  \frac{1}{e^{(e-1)(J-1)}}\(q-1-(de^{e-1})^{J+1}q^{1/2}\),
\end{align*}
which implies~\eqref{eq:A pos} for sufficiently large $q$ and the above choice of $J$.
\end{proof}

We remark that
$$
\max_{d,\, e\mid d} \rho_{d,e} = \rho_{2,2} = 1/4.
$$

\section{Isomorphism testing of functional graphs}
\label{sec:alg}

\subsection{Preliminaries}

In this section, we give a practical and efficient isomorphism testing algorithm for quadratic polynomials that is linear (in time and memory). We also extend this isomorphism testing algorithm from quadratic polynomials to any arbitrary function with only a slight increase
in the time complexity.

We note that the class of functional graphs over $\F_q$ coincides with the class of directed graphs
on $q$ nodes with all out-degrees equal to 1. However, the in-degrees depend on the particular
function $f$ associated with this graph.

Our algorithms do not depend on the arithmetic structure of $q$ (for example, 
that this is a prime power) 
or on  algebraic properties of the underlying domain (for example, that it has a
structure of a field). Hence we present them for functional graphs over an 
arbitrary set of $n$ elements. 

Clearly, any functional graph is extremely  sparse (with exactly $n$ arcs) and the size of the input that is to be considered for efficient isomorphism testing is linear in the size of an adjacency list (that is, $O(n\log n)$), rather than an adjacency matrix (that is, $O(n^2)$).
We first introduce several graph related notations.

\subsection{Notations and graph input size}

Given two functions $f$ and $h$, we denote the functional graph $\cG_f$ of $f$
as $\cG$ and the functional graph $\cG_h$ of $h$ as $\cH$.
Given a functional
graph $\cG$, we collect its connected components of the  same size
in the  sets $C^{\cG}_{i}$
 with $ 1 \leq i \leq s^{\cG}$,
where $s^{\cG}$ is the total number of distinct sizes of components of $\cG$.
  For each set $C^{\cG}_{i}$ we denote the size of the components in the set by $k^{\cG}_{i}$ and the size of the set itself by $c^{\cG}_{i} = \# C^{\cG}_{i}$.
Let
  \begin{equation}
  \label{eq:kmaxcmax}
k^{\cG}_{*} = \max_{1 \leq i \leq s^{\cG}} k^{\cG}_{i} \mand
c^{\cG}_{*} = \max_{1 \leq i \leq s^{\cG}} c^{\cG}_{i}.
\end{equation}
When there is no ambiguity, we  omit the superscript $\cG$. For convenience we denote the \textit{in-degree} of a vertex $v$ as $d^{-}(v)$ and the corresponding \textit{in-neighbourhood} as $N^{-}(v)$. Since the \textit{out-degree} of any vertex is $1$,
each connected component $C$ in a functional graph has exactly one cycle (which may be a self-loop), which we denote $\cyc(C)$. Each vertex is the root
of a (possibly empty) tree.

\subsection{Isomorphism testing of functional graphs of quadratic po\-lynomials}
\label{eq:IsomQuadr}

We now present our meta-algorithm to test the isomorphism. It comprises three phases:
\begin{description}
\item[Phase 1] Given two functional graphs $\cG$ and $\cH$, we first identify the connected components in each graph, and  the associated cycle and trees in each component.
\item[Phase 2] For each component we produce a canonical encoding.
\item[Phase 3] Finally we construct a prefix tree (formally a trie~\cite{Knuth73}), using the encodings of $\cG$, noting at each vertex of the trie the number of code strings that terminate at that vertex. Then for each encoded component of $\cH$ we match
 the code string against the trie, and decrement the counter at the appropriate trie vertex.
\end{description}

If all counters are zero after this is complete, the two graphs are isomorphic.

The first phase is achieved by combining a cycle detection algorithm and
depth-first search, as laid out in Algorithm~\ref{alg:components}.

\begin{algorithm}
\caption{\textsc{Identification of Connected Components}}
\label{alg:components}
\begin{algorithmic}[1]
\WHILE{unassigned vertices remain}
\STATE Pick an unassigned vertex $v$.
\STATE Perform Floyd's cycle detection algorithm starting at $v$.
\FOR{each cycle vertex $u$}
\STATE Perform a depth-first search on the tree attached at $u$.
\ENDFOR
\ENDWHILE
\end{algorithmic}
\end{algorithm}

The cycle detection algorithm can be done in linear time and space (in the size of each connected component) with Floyd's algorithm~\cite{Knuth69} using only two pointers. The depth-first search is a simple pre-order traversal of the tree and thus only
requires linear time and space~\cite{Knuth68}. In total, the complexity of the first phase is thus linear in time and space with the size of the graph. Note that this phase is independent of the function $f$ (it has linear complexity for any function $f$), 
leading to the following lemma.

\begin{lemma}
\label{lemma:cycle_detection}
For any functional graph $\cG$ of $n$ vertices, Algorithm~\ref{alg:components} identifies all Connected Components and has linear time and memory complexities.\end{lemma}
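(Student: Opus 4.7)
The plan is to establish the two assertions — correctness and linear complexity — separately, both resting on the basic structural description of a functional graph given in the introduction: every vertex has out-degree one, and each connected component contains a unique directed cycle with directed trees attached at the cycle vertices.

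For correctness, I would argue component by component. Pick any unassigned vertex $v$. Since out-degrees equal one and the vertex set is finite, the forward orbit of $v$ under $f$ must enter the unique cycle of the component containing $v$. Floyd's tortoise-and-hare procedure detects this cycle and, in its standard second phase, returns a vertex on it; a single walk around the cycle then enumerates all its vertices $u \in \cyc(C)$. For each such $u$, the tree attached at $u$ is precisely the set of non-cycle vertices whose forward orbit first meets the cycle at $u$, and these trees, together with $\cyc(C)$ itself, partition the component $C$. Hence a depth-first search from each $u$ along the reversed arcs exhausts $C$ without revisiting any vertex. After marking all discovered vertices as assigned, the outer loop subsequently picks vertices only from components not yet discovered, so iteration terminates exactly when the components have been identified.

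For the complexity bound, I would first preprocess the reverse adjacency list of $\cG$ in $O(n)$ time and $O(n)$ memory, so that in-neighbourhoods $N^{-}(v)$ can be listed in time linear in $d^{-}(v)$. Floyd's algorithm on a component $C$ runs in $O(|C|)$ time and $O(1)$ additional memory. The reverse-edge depth-first searches emanating from the cycle vertices of $C$ together visit each vertex of $C$ exactly once, contributing $O(|C|)$ time and at most $O(|C|)$ stack depth. Summing over components and using $\sum_{C} |C| = n$ yields $O(n)$ time and $O(n)$ auxiliary memory overall.

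The step that most deserves care, and is the only nontrivial point, is the decomposition of a component into its cycle together with the reverse-DFS trees rooted at cycle vertices: this is what guarantees that the inner \textbf{for} loop does linear work without revisiting vertices. Everything else — Floyd's correctness and cost, DFS bookkeeping, and the reverse-adjacency preprocessing — is standard, so the argument reduces to invoking these classical facts once the structural decomposition is established.
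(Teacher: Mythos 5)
Your proposal is correct and follows essentially the same route as the paper, which justifies the lemma only by the brief preceding discussion: Floyd's cycle detection is linear in time and space per component, the depth-first searches on the attached trees are linear, and summing over components gives $O(n)$ overall. You merely make explicit what the paper leaves implicit (the reverse-adjacency preprocessing and the partition of each component into its unique cycle plus the trees rooted at cycle vertices), so this is a more detailed write-up of the same argument rather than a different one.
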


On the other hand, the second phase depends on the nature of the function. In this section, we focus on quadratic polynomials which provide an especially interesting case when considering the isomorphism of functional graphs. A $k$-ary tree is
\textit{full} (or \textit{proper}) if every non-leaf vertex has exactly $k$ children. (Note that, here, a full $k$-ary tree does not imply that all root-leaf paths have the same length.) As a quadratic polynomial can have at most one repeated root, the functional graph is almost a full binary tree. This allows certain savings in building a canonical labelling of the graph. We note that
 if there is a repeated root, we can deal with the containing component specially by noting which vertex has one child, and adding a dummy second child, then in the two graphs under consideration the dummy vertices must be matched to each other
in any isomorphism.

We recall that the number of different binary trees on $n$ nodes is the $n$-th Catalan number, which for large $n$ is of size $4^n/n^{3/2}$. It is well-known that binary trees can be encoded with exactly $2n+1$ bits~\cite{Knuth68}: by first extending the
original tree  by adding ``special" nodes whenever a null subtree is present (two for leaves and one for non-full internal nodes), and then doing  a pre-order traversal of the tree labelling original nodes with ones and special nodes with zeroes.
When the binary tree is full (which is our case with quadratic polynomials), only $n$ bits suffice to encode the original tree by using a similar technique (simply making the original leaves the special nodes and then performing a pre-order traversal of the tree labelling internal nodes with ones and leaves with zeroes). Our canonical labelling extends this bound by including the cycle with a minimal number of extra bits.

To produce the canonical labelling of a functional graph derived from a quadratic polynomial we employ Algorithms~\ref{alg:general_labelling} and~\ref{alg:LABEL},
 where $\varepsilon$ is the empty string, $s_{i}$ is the string $s$ after circular shift to the bit position $i$, and $\val(s)$ is the interpretation of the string $s$ as a number. In the description of the algorithms we denote string concatenation by
 $\circ$.

\begin{algorithm}
        \caption{\textsc{Canonical Labelling}}
\label{alg:general_labelling}
\begin{algorithmic}
\REQUIRE component $C$
\STATE $\mathrm{s:= \varepsilon}$
\FOR{each vertex $\mathrm{v}$ in $\cyc(\mathrm{C})$}
\STATE $\mathrm{s:=s\circ\text{\textsc{Label}($\mathrm{v}$)}}$
\ENDFOR
\STATE $\mathrm{max := \val(s_{1})}$
\STATE $\mathrm{maxpos := 1}$
\FOR{$\mathrm{i := 2 \text{ to } \Card{\cyc(\mathrm{C})}}$}
\IF{$\mathrm{\val(s_{i}) > max}$}
\STATE $\mathrm{max := \val(s_{i})}$
\STATE $\mathrm{maxpos := i}$
\ENDIF
\ENDFOR
\RETURN $\mathrm{s_{maxpos}}$
\end{algorithmic}
\end{algorithm}

\begin{algorithm}
\caption{\textsc{Label}($v$)}
\label{alg:LABEL}
\begin{algorithmic}[1]
\REQUIRE vertex $\mathrm{v}$.
\IF{$\mathrm{d^{-}(v) = 0}$}
\RETURN ``$\mathrm{0}$''
\ELSE
\STATE $\mathrm{left:=\text{\textsc{Label}} (left(v))}$
\STATE $\mathrm{right:=\text{\textsc{Label}}(right(v))}$
\IF{$\mathrm{left < right}$}
\RETURN $\mathrm{1\circ right \circ left}$
\ELSE
\RETURN $\mathrm{1 \circ left \circ right}$
\ENDIF
\ENDIF
\end{algorithmic}
\end{algorithm}

Algorithm~\ref{alg:general_labelling} runs on each component in turn and produces a canonical label for the component by applying Algorithm~\ref{alg:LABEL}, that is, function {\textsc{Label}($v$)}, to each tree rooted on a vertex of the component's
cycle (Figure~\ref{fig:coded_binary_tree} gives an example), concatenating these labels in the order given by the cycle, then shifting circularly the concatenated label to begin with the cycle vertex that gives the greatest value. Note that if $t$ such
vertices exist (that is, $t$ possible circular shifts leading to the greatest value), the component must have at least a $t$-fold symmetry of rotation around the cycle. Thus, this maximal orientation of the cycle is unique up to automorphism.

Algorithm~\ref{alg:LABEL} encodes a full, rooted, binary tree by assigning each vertex a single bit: $1$ if the vertex is internal, $0$ if it is a leaf. The label is then recursively built by concatenating the assigned bit of the current vertex $v$ to the
 lexicographically sorted labels
of its left child, $left(v)$, and right child, $right(v)$.
In effect this produces a traversal of the tree where we traverse higher weight subtrees first. As each vertex contributes one bit to the label, the total length of the label is $k$ bits for a component of size $k$ and thus $n$ bits for the entire graph.

\begin{figure}
\begin{tikzpicture}[level/.style={sibling distance=60mm/#1}]

\node[draw, circle] (A) {$A$}
child{ node[draw, circle] (B) {$B$}
        child{ node[draw, circle] (C) {$C$} }
        child{ node[draw, circle] (D) {$D$} }
}
child { node[draw, circle] (E) {$E$}
        child{ node[draw, circle] (F) {$F$}
                child{ node[draw, circle] (G) {$G$}}
                child{ node[draw, circle] (H) {$H$}}
        }
        child{ node[draw, circle] (I) {$I$}}
};

\node (Alabel) [left of = A, xshift=-6mm] {$1\underbrace{11000}_{from\; E}\overbrace{100}^{from\;B}$};
\node (Blabel) [left of = B] {$100$};
\node (Clabel) [left of = C] {$0$};
\node (Dlabel) [left of = D] {$0$};
\node (Elabel) [left of = E, xshift=-5mm, yshift=-2mm] {$1\underbrace{100}_{from\; F}0$};
\node (Flabel) [left of = F] {$100$};
\node (Glabel) [left of = G] {$0$};
\node (Hlabel) [left of = H] {$0$};
\node (Ilabel) [left of = I] {$0$};
\end{tikzpicture}
\caption{An example binary tree labelled with the canonical coding generated at each level by Algorithm~\ref{alg:LABEL}.}\label{fig:coded_binary_tree}
\end{figure}

\begin{lemma}
\label{lemma:quadratic_canonical_labelling}
For any functional graph $\cG$ of a quadratic polynomial over $\F_q$ with $n=q$ vertices, Algorithms~\ref{alg:general_labelling} and~\ref{alg:LABEL}
build an $n$-bit size canonical labelling  of $\cG$  and have linear time and memory complexities.
\end{lemma}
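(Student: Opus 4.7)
My plan is to verify three claims in turn: that the output string has exactly $n$ bits, that it is canonical (equal labels iff isomorphic graphs), and that both algorithms run in linear time and memory.

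For the bit-count, I would observe that Algorithm~\ref{alg:LABEL} writes exactly one bit per visited vertex---``$0$'' for a leaf, ``$1$'' for an internal vertex---so the label of a component of size $k$ contains exactly $k$ bits provided every tree hanging off a cycle vertex is a full binary tree. Since $f$ is quadratic, every vertex in $\cG$ has in-degree at most $2$, and exactly $2$ unless $f$ has a repeated root; the single exceptional component is handled by the dummy-vertex trick described just before Algorithm~\ref{alg:general_labelling}, restoring the full binary property. Summing the per-component bit counts therefore yields $n$ bits.

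For canonicity, I would first prove by induction on tree height that Algorithm~\ref{alg:LABEL} assigns equal labels to isomorphic rooted full binary trees. The base case (a single leaf) is immediate; at an internal vertex the algorithm always prints the lexicographically larger child label first, which absorbs the only nontrivial tree automorphism (swapping left and right children). The converse---equal labels forcing an isomorphism---follows by running the induction backwards on the pre-order decomposition of the label. Lifting to the full component, the cycle inherits a canonical direction from the functional-graph edges, so isomorphic components differ only by a cyclic rotation of the concatenated tree-label string; the maximum-$\val$ rotation chosen in Algorithm~\ref{alg:general_labelling} is then a unique invariant of the isomorphism class (the $t$-fold ambiguity in the choice of starting position corresponds precisely to a $t$-fold rotational symmetry of the component).

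For the complexity, each vertex is touched $O(1)$ times by the recursive traversal of Algorithm~\ref{alg:LABEL}, and the cycle scan inside Algorithm~\ref{alg:general_labelling} is linear in the cycle length. The delicate point, which I expect to be the main obstacle, is the lexicographic comparison and string concatenation performed at every internal vertex: done naively these could blow up to $\Theta(n^2)$. I would resolve this by representing each subtree's label by a pointer together with a length, and by ordering the two children's labels via an AHU-style level-wise bucket sort (equivalently, by assigning each subtree an integer certificate computed bottom-up and using those certificates as comparison keys); this keeps the total work linear in the size of each tree. The final maximal-rotation step on the cycle can be executed in linear time by Booth's algorithm. Memory remains linear because the union of all labels has total length $n$ bits and the recursion stack never exceeds the depth of the component being processed.
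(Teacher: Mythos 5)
Your proposal is essentially correct, but on the complexity claim it takes a genuinely different route from the paper, and its starting premise there is more pessimistic than necessary. The paper analyses Algorithms~\ref{alg:general_labelling} and~\ref{alg:LABEL} exactly as written: at each internal vertex $v$ the lexicographic comparison of the two child labels costs at most $\min\{|T_{left}(v)|,|T_{right}(v)|\}$ bit comparisons, as in~\eqref{eq:bincomp}, and summing this ``smaller-child'' bound over all internal vertices gives fewer than $n\log n$ bit comparisons (worst case the complete tree); this is declared linear because the input is measured as an adjacency list of size $O(n\log n)$, and memory stays linear since child labels are discarded on the fly as the parent label is generated. So the $\Theta(n^2)$ blow-up you fear for the comparisons does not occur even in the naive implementation---the small-to-large bound you would invoke is exactly the paper's one-line recurrence---though your worry is legitimate for physical string copying and for the rotation-maximisation loop of Algorithm~\ref{alg:general_labelling}, neither of which the paper's proof analyses explicitly. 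Your replacement of the string comparisons by AHU-style integer certificates and of the rotation step by Booth's algorithm buys a genuinely $O(n)$-operation implementation and closes those gaps, but it proves the lemma for a modified implementation rather than for the stated algorithms: to output the very same canonical string you must ensure the certificate order is consistent with the comparison of variable-length child labels performed in Algorithm~\ref{alg:LABEL}, a point your sketch leaves implicit. The bit-count and canonicity parts of your argument (one bit per vertex, fullness of the trees for quadratic $f$ with the dummy-vertex fix, induction on rooted trees plus uniqueness of the maximal rotation up to the $t$-fold cyclic symmetry) agree with the paper's treatment, which in fact devotes its proof almost entirely to the size and complexity claims and leaves canonicity to the surrounding discussion.
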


\begin{proof}
From the description of the traversal process in Algorithms~\ref{alg:general_labelling}
and~\ref{alg:LABEL}, it is clear that each node $v$ in the tree is associated with
a canonical coding \text{\textsc{Label}($v$)} of size $|T_v|$ bits, where $T_v$ is the subtree with root $v$. All leaves are labelled with $0$, and the canonical label
of the whole tree $T_v$ has exactly $k=|T_v|$ bits. The overall memory requirement remains linear: both child labels can be discarded, on the fly, as a parent label is generated.

The worst-case time complexity is slightly more involved, a (lexicographic) sorting is required at each internal node. More precisely, each internal node $v$ requires a number of (lexicographic) bit comparisons $\comp(v)$ equal to the size of the smallest
 label among both children:
\begin{equation}
\label{eq:bincomp}
\begin{split}
\comp(v)  & =  \min\left\{|\text{\textsc{Label}(left($v$))} |,
|\text{\textsc{Label}(right($v$))}|\right\}  \\
 & =  \min\left\{|T_{left}(v)|, |T_{right}(v)|\right\} \leq \fl{\frac{|T(v)|-1}{2}}.
 \end{split}
 \end{equation}
Hence, we see that the worst case for the number of bit comparisons occurs when each subtree is balanced, that is when the full binary tree is complete. Using this simple recurrence, it is easy to see that this leads to less than $n \log n$ bit comparison
s
 for any binary tree of size $n$, which is linear in the size of the input and completes the proof.
\end{proof}

Note that to finally test the isomorphism between two graphs $\cG$ and $\cH$, it remains to compare the canonical labellings of each connected components of each graph with one another (Phase 3). A general brute-force approach (by comparing canonical
labellings of connected components pair-wise) could be ineffective (as shown in the next section). To keep it linear in the size of the input, the third phase builds a trie (or prefix tree) using the encodings of the functional graph $\cG$ by inserting the canonical labelling of each connected components, obtained after Phase~2, one after the other. Each node in the trie is also equipped with a counter initialised to zero and incremented each time the node represents the terminating node of a newly
 inserted canonical labelling of a connected component. It then suffices to check that each canonical labelling of each connected component of $\cH$ is represented in the trie,  decrementing the respective counter each time there is a match. The two functional graphs are isomorphic if there is no mismatch for all canonical labellings of $\cH$ (all counters are zero after all components have been considered), and are non-isomorphic otherwise.

\begin{lemma}
\label{lemma:quadratic_trie}
For any functional graph $\cG$ and $\cH$, each with an $n$-bit canonical labelling, Phase~3 tests their isomorphism by comparing the canonical labelling of $\cG$ and $\cH$ and has linear time and memory complexities.
\end{lemma}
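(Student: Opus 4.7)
The plan is to analyse Phase~3 directly, using the size bound on canonical labellings from Lemma~\ref{lemma:quadratic_canonical_labelling}. Recall that each connected component $C$ of $\cG$ receives a canonical label of length $|C|$ bits, so the concatenated length of all component labels of $\cG$ is exactly $n$, and similarly for $\cH$. Assuming $\cG$ and $\cH$ have the same vertex count (otherwise they cannot be isomorphic and we reject in $O(1)$), this is the total input size that Phase~3 must process.

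First I would describe the trie data structure precisely: since the labels are binary strings over the alphabet $\{0,1\}$, each trie node stores at most two child pointers and an integer counter, which is $O(1)$ memory per node. Inserting a single string of length $k$ into the trie creates at most $k$ new nodes and requires exactly $k$ bit comparisons, with the counter at the terminal node being incremented at the end. Summing over all components of $\cG$, insertion uses total time and memory $O(n)$, and the resulting trie has at most $n+1$ nodes. Thus the storage required by the trie is linear in $n$.

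Next I would describe the matching step for $\cH$: for each component $C'$ of $\cH$ with label of length $k'$, we traverse the trie along this label, spending $O(k')$ time. Either the walk falls off the trie (report non-isomorphic), or it reaches a node; if that node's counter is positive we decrement it, otherwise we report non-isomorphic. Summing over all components of $\cH$ gives total time $O(n)$ for the matching phase. Finally, verifying that all counters have returned to zero is a traversal of the trie, which has $O(n)$ nodes, so this final check is also $O(n)$. Combining all three contributions, Phase~3 runs in $O(n)$ time and uses $O(n)$ memory.

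There is really no hard step here: correctness of the isomorphism test follows because two functional graphs are isomorphic if and only if their multisets of components are equal, and by Lemma~\ref{lemma:quadratic_canonical_labelling} the canonical labelling is a complete invariant of each component up to isomorphism. The trie together with counters is precisely a compact representation of the multiset of labels of $\cG$, so after processing $\cH$ all counters are zero if and only if the two multisets coincide. The only mild point to note is that the binary alphabet keeps the per-node work constant, which is what permits both the time and the memory to stay linear; with a larger alphabet one would pick up an alphabet-size factor, but that is not an issue here.
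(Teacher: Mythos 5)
Your proposal is correct and follows essentially the same route as the paper: build the trie from the canonical labels of $\cG$ with counters at terminal nodes, match the labels of $\cH$ while decrementing, and observe that the trie has $O(n)$ nodes and that both insertion and matching cost $O(n)$. Your write-up is somewhat more explicit (the final all-counters-zero sweep and the multiset-invariance argument for correctness), and the only nuance the paper handles that you gloss over is the bit-size of the counters, which the paper charges against the branch-sharing savings while you absorb it into a unit-cost assumption; either accounting is acceptable here.
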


\begin{proof}
It is easy to see that the trie built for the functional graph $\cG$ has at most $n$ nodes. This case is only possible if  all canonical labellings of connected components are disjoint (that is, generate disjoint branches in the tree). As more canonical
 labels overlap, fewer nodes are created. If the labels match, the respective counter (and its size) are incremented, but the cost of increasing the counter remains lower than the cost of creating a distinct branch in the trie. Thus, the overall size
 remains $O(n)$ in memory space. It is also easy to see that creating the initial trie with the canonical labels of $\cG$ takes $O(n)$ time and memory, and the same cost occurs for matching all canonical labels of $\cH$ (and may stop before if the two graphs are not isomorphic).
\end{proof}

Again it is interesting to note that the complexity of Phase~3 does not depend on the type of functional graph but depends solely on the size of the canonical labelling.

Combining Lemmas~\ref{lemma:cycle_detection}, \ref{lemma:quadratic_canonical_labelling} and~\ref{lemma:quadratic_trie}, we obtain the following theorem.

\begin{theorem}
\label{theorem:quadratic}
For any functional graphs $\cG$ and $\cH$ of quadratic functions with $n$ vertices, Phases~1, 2 and~3 combined provide an isomorphism test  that has linear time and memory complexities.
\end{theorem}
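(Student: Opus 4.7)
The plan is essentially to assemble the three preceding lemmas into a single statement about the complete three-phase pipeline. First I would observe that the algorithm, when presented with input graphs $\cG$ and $\cH$ each on $n$ vertices, runs the three phases sequentially on each graph, and that the total resource usage is therefore the sum (rather than the product) of the resources used by each phase. So it suffices to argue linearity phase by phase and then argue correctness of the overall procedure.

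For the resource bound, I would invoke Lemma~\ref{lemma:cycle_detection} to conclude that Phase~1 partitions each input into connected components, with each component decomposed into its unique cycle and the trees rooted at cycle vertices, in time and memory $O(n)$. Then, applying Lemma~\ref{lemma:quadratic_canonical_labelling} to each component produced in Phase~1, I would obtain a canonical binary string of length equal to the component's size; summed over all components of $\cG$ (respectively $\cH$), this produces an $n$-bit canonical labelling in $O(n)$ time and memory. Finally, Lemma~\ref{lemma:quadratic_trie} applied to these two $n$-bit labellings shows that the trie-based comparison in Phase~3 also runs in $O(n)$ time and space. Adding the three linear bounds gives a linear overall complexity.

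For correctness, I would note that two functional graphs are isomorphic if and only if there is a bijection between their sets of connected components matching each $\cG$-component to an isomorphic $\cH$-component. Phase~1 extracts these components exactly. Phase~2 assigns to each component a canonical label that is invariant under the rotational symmetries of its cycle (guaranteed by taking the maximum circular shift in Algorithm~\ref{alg:general_labelling}) and under the left/right symmetry of each internal tree vertex (guaranteed by the lexicographic ordering in Algorithm~\ref{alg:LABEL}), so that two components receive the same label if and only if they are isomorphic as rooted functional subgraphs. Phase~3, via the trie with per-node counters, checks exactly that the two multisets of component labels coincide: the counters end at zero iff every label inserted from $\cG$ is consumed by a matching label from $\cH$, which is the desired matching.

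The only subtlety, and the step I would flag as the one deserving the most care, is the dummy-vertex handling in Phase~2 when the quadratic polynomial has its (unique) repeated root: one must add the dummy second child consistently in both $\cG$ and $\cH$ so that any isomorphism necessarily sends dummy to dummy. Once that convention is fixed, the trees remain full binary trees and the $n$-bit encoding and its canonicity go through unchanged, so the combined argument yields the linear-time, linear-memory isomorphism test.
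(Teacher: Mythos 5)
Your proposal matches the paper's argument: the theorem is obtained exactly by combining Lemmas~\ref{lemma:cycle_detection}, \ref{lemma:quadratic_canonical_labelling} and~\ref{lemma:quadratic_trie}, with the phase costs adding up to a linear bound. Your extra remarks on canonicity and the dummy-vertex convention for a repeated root simply make explicit what the paper states in the surrounding discussion, so the approach is essentially the same.
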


It is also interesting to note that the trie built in Phase~3 provides a canonical representation of size $O(n)$ for any functional graph of size $n$. We
 exploit this property to present an algorithm to enumerate all functional graphs corresponding to polynomials of
degree $d$ over $\F_q$  in Section~\ref{sub:counting}.

\subsection{General functional graph isomorphism}

In a general setting, there are numerous standard results that can be used.
The graph isomorphism problem can be solved in time linear in the number of vertices for connected planar graphs~\cite{HW74} and (rooted) trees~\cite{Kelly57}.
Also the fact that our graphs are directed is not an issue as there is a linear-time reduction from directed graph isomorphism to undirected graph isomorphism~\cite{Miller}.

We first give a simple example of how these techniques can be combined to prove a simple upper bound for Functional Graph Isomorphism for arbitrary functions.
Other combinations of these standard techniques are possible to give similar near-linear time.
However we  prove in the remainder of this section that simple and practical, yet efficient (in time and memory), techniques  can be used by extending the algorithms of Section~\ref{eq:IsomQuadr} to arbitrary functions.

\begin{theorem}
\label{thm:standardAlgos}
For any functional graphs $\cG$ and $\cH$ of arbitrary functions with $n$ vertices, there is an isomorphism test using standard algorithms with   $O(c_* n)$ time complexity,
where $c_* = \max\{c^{\cG}_{*}, c^{\cH}_{*}\}$ and $c^{\cG}_{*}$, $c^{\cH}_{*}$ 
are defined by~\eqref{eq:kmaxcmax}.
\end{theorem}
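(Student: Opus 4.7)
The plan is to reduce the isomorphism test to a multiset comparison of canonical labels, one per connected component, built from standard linear-time subroutines.

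First, I would apply Algorithm~\ref{alg:components} to both $\cG$ and $\cH$ to enumerate the connected components in $O(n)$ time; by Lemma~\ref{lemma:cycle_detection} this also exposes, for each component $C$, the unique cycle $\cyc(C)$ together with the family of trees rooted at the cycle vertices.

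Next, I would build an isomorphism-invariant canonical label for each component. For every cycle vertex $v$, compute a canonical string encoding of the rooted tree attached at $v$ using the standard linear-time rooted tree isomorphism procedure (in the spirit of~\cite{Kelly57}); this takes time linear in the size of the subtree. Concatenate these tree labels in cyclic order, producing a string of length $O(k)$ for a component of size $k$. To eliminate the ambiguity in the choice of starting cycle vertex, replace the resulting string by its lexicographically minimum rotation, which can be computed in time linear in the string length by a standard string-matching algorithm. The result is a canonical label that distinguishes components up to functional graph isomorphism, and all labels can be produced in $O(n)$ time overall.

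For the matching phase, partition the canonical labels of the components of $\cG$ by component size, and for each size $k_i$ form the list of the $c_i$ labels of $\cG$-components of that size. For each component of $\cH$ of size $k_i$, scan this list looking for a matching label and remove a match (or declare non-isomorphism if none exists); a single scan costs $O(c_i k_i)$ in the worst case, since one comparison of two length-$k_i$ strings is $O(k_i)$. Summing, size class $k_i$ contributes $O(c_i^2 k_i) \le O(c_* \cdot c_i k_i)$, so the total matching cost is $O\bigl(c_* \sum_i c_i k_i\bigr) = O(c_* n)$, using $\sum_i c_i k_i = n$. Combined with the $O(n)$ cost of the component identification and canonical labelling phases, the overall complexity is $O(c_* n)$. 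The only delicate point is guaranteeing truly linear (rather than nearly linear) complexity in the two canonicalisation sub-routines, which is exactly what the standard rooted-tree isomorphism and minimal-rotation string algorithms deliver.
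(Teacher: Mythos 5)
Your proposal is correct, and its decisive step is the same as the paper's: after Algorithm~\ref{alg:components}, you compare components only within the same size class and bound the total work by $\sum_i c_i^2 k_i \le c_* \sum_i c_i k_i = c_* n$. Where you genuinely differ is in how a single pair of components is compared. The paper treats each comparison as a black-box call to a standard linear-time isomorphism routine (the planar-graph algorithm of~\cite{HW74} for components with a nontrivial cycle, rooted-tree isomorphism as in~\cite{Kelly57} for components whose cycle is a self-loop, rooted at the loop vertex), so each of the at most $c_i^2$ comparisons in class $i$ costs $O(k_i)$; you instead canonicalise every component once (rooted-tree codes around the cycle, then a minimal rotation) and compare bit strings. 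Your route buys a reusable $O(k)$-bit certificate per component, closer in spirit to the paper's Theorem~\ref{theorem:general} but with linear rather than quadratic labelling cost, since you invoke stronger standard subroutines (linear-time canonical tree codes and a linear-time minimal-rotation algorithm); the paper's route avoids any canonical-form construction at the expense of repeating isomorphism tests.

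One point in your construction is asserted without justification: that the lexicographically minimal rotation of the concatenated tree codes, taken over \emph{all} bit positions rather than only over the $\Card{\cyc(C)}$ positions aligned with cycle vertices, is a complete invariant. A priori a rotation could realign codeword boundaries and thus identify two components whose cyclic sequences of trees differ. For the standard encodings this does not happen: with prime-Dyck (or Lukasiewicz-type) tree codes, the boundaries of any circular factorisation are exactly the positions where the $\pm 1$ height profile of the circular word attains its minimum, so the circular bit string determines the cyclic sequence of tree codes uniquely. Either supply this short argument, or sidestep it by minimising only over the rotations that start at cycle vertices, as the paper's Algorithm~\ref{alg:general_labelling} does. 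With that point supplied, your argument yields the stated $O(c_* n)$ bound.
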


\begin{proof}
A simple approach that can be applied to functional graphs is to run Algorithm~1
(that builds each connected component) and then compare the connected components of the two graphs pairwise, using the appropriate algorithm as a subroutine (for components with a cycle, we can use the planar graph algorithm, for components with a self-loop, we can use 
 the rooted tree algorithm where we treat the vertex with the self-loop as the root). This involves at most $\binom{n}{2}$ comparisons and thus gives an $O(n^2)$ algorithm overall.

Using the sizes of the various components, we can refine this analysis slightly.
Given two functional graphs $\cG$ and $\cH$, if we have the isomorphism
$\cG \cong \cH$ then $s^{\cG} = s^{\cH}$ and for all $i \in [1,s^{\cG}]$ we also have $c^{\cG}_{i} = c^{\cH}_{i}$ and $k^{\cG}_{i} = k^{\cH}_{i}$.  (If the graphs are isomorphic, $c_* = c^{\cG}_{*} =  c^{\cH}_{*}$.) On the other hand,
if one of these pairs of values disagree then $\cG \not \cong \cH$.
Then, denoting these common values as $s$ and $c_i$, $k_i$, $1 \le i \le s$,
clearly for both graphs we have
$$
\sum_{i =1}^{s}c_{i}k_{i} = n,
$$
where $n$ is the order of the graphs.

Clearly we only need to compare components in the same size class. This gives a running time proportional to:
\begin{align*}
\sum_{i=1}^{s} c_{i}^{2}k_{i} \leq c_* \sum_{i=1}^{s} c_{i}k_{i} = c_*n,
\end{align*}
where $c_* = \max_{i=1, \ldots, s} c_i$.
\end{proof}

If each size class $C_{i}$ is bounded, then this na\"{i}ve algorithm is linear in the number of vertices. In the general case however it is likely there are numerous components of the same size~\cite{FlOdl} thus possibly leading to a worst-case bound of $
O(n^2)$ time.
Fortunately  even in this case, as we  now show that we can still solve the isomorphism problem with linear memory complexity and by increasing slightly the cost of building the canonical labels.

The challenge is that, in the general case, we cannot assume that the trees associated with each component are full, nor necessarily have any particular bound on the number of children (note that polynomials of degree $d$ do however have at most $d$ children in the trees).

For the general case we replace Algorithm~\ref{alg:LABEL} with Algorithms~\ref{alg:llabel} and~\ref{alg:rlabel}, and replace the call to \textsc{Label} in Algorithm~\ref{alg:general_labelling} with a call to \textsc{LeftLabel} with the root vertex of the
tree.

\begin{algorithm}
\caption{\textsc{LeftLabel}}
\label{alg:llabel}
\begin{algorithmic}[1]
\REQUIRE vertex $v$
\STATE $\mathrm{label}_{v} := \varepsilon$
\STATE $\mathrm{labelSet}:=\emptyset$
\STATE $\mathrm{finalLabel}:=\varepsilon$
\IF{$v$ is not a leaf}
\STATE $\mathrm{label}_{v} := 1\circ\text{\textsc{LeftLabel($left(v)$)}}$
\IF{$v$ has a right child}
\STATE $\mathrm{labelSet} := \text{\textsc{RightLabel($right(v)$)}}$
\ENDIF
\STATE $\mathrm{labelSet} := \mathrm{labelSet}\cup\{\mathrm{label}_{v}\}$
\STATE \textsc{Sort($\mathrm{labelSet}$)}
\FOR{$i := 1 \text{ to } \Card{\mathrm{labelSet}}$}
\STATE $\mathrm{finalLabel}:= \mathrm{finalLabel}\circ\mathrm{labelSet}[i]$
\ENDFOR
\ELSE
\STATE $\mathrm{finalLabel} := 0$
\ENDIF
\RETURN $\mathrm{finalLabel}$
\end{algorithmic}
\end{algorithm}

\begin{algorithm}
\caption{\textsc{RightLabel}}
\label{alg:rlabel}
\begin{algorithmic}[1]
\REQUIRE vertex $v$
\STATE $\mathrm{label}_{v} := \varepsilon$
\STATE $\mathrm{labelSet}:=\emptyset$
\IF{$v$ is not a leaf}
\STATE $\mathrm{label}_{v} := 1\circ\text{\textsc{LeftLabel($left(v)$)}}$
\ELSE
\STATE $\mathrm{label}_{v} := 0$
\ENDIF
\IF{$v$ has a right child}
\STATE $\mathrm{labelSet} := \text{\textsc{RightLabel($right(v)$)}}$
\ENDIF
\RETURN $\mathrm{labelSet}\cup\{\mathrm{label}_{v}\}$
\end{algorithmic}
\end{algorithm}

\begin{figure}
\begin{minipage}{.49\textwidth}
\begin{tikzpicture}[level/.style={sibling distance=40mm/#1},thick,scale=0.5, every node/.style={transform shape}]

\node[draw, circle] (A) {$A$}
child{node[draw, circle] (B) {$B$}}
child{node[draw, circle] (C) {$C$}
        child{node[draw, circle] (D) {$D$}}
        child{node[draw, circle] (E) {$E$}}
}
child{node[draw, circle] (F) {$F$}
        child{node[draw, circle] (G) {$G$}}
};
\end{tikzpicture}
\end{minipage}
\begin{minipage}{.49\textwidth}
\begin{tikzpicture}[level/.style={sibling distance=60mm/#1},thick,scale=0.5, every node/.style={transform shape}]
\node[draw, circle] (A') {$A$}
child{node[draw, circle] (B) [xshift=10mm] {$B$}
        child{node[draw, circle, fill] (d1) {}}
        child{node[draw, circle] (C) [xshift=5mm] {$C$}
                child{node[draw, circle] (D) [xshift=-2mm] {$D$}
                        child{node[draw, circle, fill, xshift=-5mm] (d2) {}}
                        child{node[draw, circle] (E) [xshift=10mm] {$E$}
                                child{node[draw, circle, fill, xshift=-5mm] (d3) {}}
                                child{node[draw, circle, fill, xshift=10mm] (d4) {}}
                        }
                }
                child{node[draw, circle] (F) [xshift=40mm, yshift=-25mm] {$F$}
                        child{node[draw, circle] (G) [xshift=-5mm] {$G$}
                                child{node[draw, circle, fill, xshift=-5mm] (d5) {}}
                                child{node[draw, circle, fill, xshift=10mm] (d6) {}}
                        }
                        child{node[draw, circle, fill, xshift=10mm] (d7) {}}
                }
        }
}
child[missing]{node[draw=none, circle, fill=none, xshift=-7mm] (d8) {}};

\node (Alabel) [right of = A, xshift = 20mm, yshift = -3mm] {$1\underbrace{110100}_{from\;C}\underbrace{1100}_{from\;F}\underbrace{10}_{from\;B}0$};
\node (Blabel) [right of = B] {$10$};
\node (Clabel) [right of = C, xshift=3mm] {$110100$};
\node (Dlabel) [right of = D] {$10$};
\node (Elabel) [right of = E] {$10$};
\node (Flabel) [right of = F] {$1100$};
\node (Glabel) [right of = G] {$10$};

\node (d1label) [right of = d1] {$0$};
\node (d2label) [right of = d2] {$0$};
\node (d3label) [right of = d3] {$0$};
\node (d4label) [right of = d4] {$0$};
\node (d5label) [right of = d5] {$0$};
\node (d6label) [right of = d6] {$0$};
\node (d7label) [right of = d7] {$0$};
\end{tikzpicture}
\end{minipage}
\caption{An example non-binary tree (left) and the equivalent binary tree (right) labelled with the canonical coding generated at each level by Algorithms~\ref{alg:llabel} and~\ref{alg:rlabel}. The black vertices in the binary tree on the right are the ad
ded vertices.}\label{fig:general_tree_coded}
\end{figure}

That is, the second phase, in the general case, is achieved by Algorithms~\ref{alg:general_labelling}, \ref{alg:llabel} and~\ref{alg:rlabel}, which take each component of the input graph(s), produce a canonical label by first labelling each tree rooted at
 a cycle vertex, concatenating these labels then shifting the label to obtain the maximum value. Ultimately, we consider these labels as bit strings with the final label of a component taking $2k$ bits where $k$ is the number of vertices in the component.
 We can then encode the graph as a whole with $2n$ bits. To obtain this bound we represent the trees attached to the cycles with left-child-right-sibling binary trees (e.g.~see Knuth~\cite{Knuth68} for binary representation of trees), in which
the right child of a vertex is a sibling and the left child is the first child (we can take any ordering for our purposes).

The two tree labelling algorithms (\textsc{LeftLabel} and \textsc{RightLabel}) together produce the canonical labelling of the tree in several steps. First the tree is implicitly extended to a full binary tree by adding leaf vertices whenever a vertex is
missing a child, except at the root, as it cannot have siblings, so the terminating leaf is superfluous. Each internal vertex is labelled with ``1'' and each leaf with ``0''. Each vertex extends its label by concatenating its label with the label of its
left subtree, then adding this label to the set of labels received from its right
subtree. If a vertex is a left child (that is, it is the first child of its parent in the normal representation), it sorts this set of labels, largest to smallest, concatenates them and passes this label to its parent (Figure~\ref{fig:general_tree_coded} illustrates the process).

\begin{lemma}
\label{lem:Canon Label}
The combined Algorithms~\ref{alg:llabel} and~\ref{alg:rlabel} perform at most $O(k^2)$ bit comparisons and use linear memory space to build a canonical label of size $2k$ bits for any component of size $k$.
\end{lemma}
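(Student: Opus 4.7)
The plan is to establish three things separately: that the canonical label has length $2k$ bits, that the algorithm uses $O(k)$ working memory, and that it performs $O(k^2)$ bit comparisons.

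For the size, I would interpret Algorithms~\ref{alg:llabel} and~\ref{alg:rlabel} as forming the left-child-right-sibling binary representation of the component's tree on $k$ nodes and then completing it to a full binary tree by adjoining a dummy leaf wherever a vertex is missing a left or right child, with a single exception at the root (which can have no sibling). In this completed tree, each of the $k$ original vertices becomes internal and therefore emits a ``$1$'', while each added dummy leaf emits a ``$0$''. Since a full binary tree with $k$ internal nodes has $k+1$ leaves and one of these is suppressed at the root, the total bit count is exactly $k+(k+1)-1=2k$, which I would verify formally by a short induction on $k$.

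For the memory, I would observe that the recursion is depth-first, so at any moment the partial labels held in memory consist of the subtree labels stacked on the call stack together with the sibling-label set currently being accumulated by the innermost \textsc{RightLabel} call. These all refer to pairwise disjoint subtrees of the component, hence their total label length is bounded by twice the component size, namely $O(k)$. As soon as a \textsc{LeftLabel} call returns, the auxiliary sibling set is discarded in favour of the single concatenated label it produces, so the working memory remains $O(k)$ throughout.

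For the time bound, the only bit comparisons arise from the sort inside \textsc{LeftLabel}. At a vertex $v$ with $m_v$ sibling labels of lengths $\ell_1,\dots,\ell_{m_v}$, a pairwise sort performs at most $\binom{m_v}{2}$ comparisons, and each comparison of two labels costs at most the length of the shorter one. Using the inequality
$$
\sum_{i<j}\min(\ell_i,\ell_j)\le \frac{m_v-1}{2}\sum_i\ell_i\le m_v L_v,
$$
where $L_v=\sum_i \ell_i\le 2\sigma_v$ is controlled by the subtree size $\sigma_v$ rooted at $v$, the per-vertex cost is $O(m_v\sigma_v)$. Summing and using $\sum_v m_v\le k$ (each non-root vertex belongs to exactly one sibling group) together with $\sigma_v\le k$, we obtain $\sum_v m_v\sigma_v\le k\sum_v m_v\le k^2$.

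The main obstacle is the time bound, because the per-vertex work depends jointly on the fan-out $m_v$ and on the label lengths, and a naive estimate gives $O(k^3)$. The key point is that a comparison between two labels is bounded in bit cost by the \emph{shorter} of the two, and this additional slack is precisely what lets the total telescope to the claimed $O(k^2)$ bound.
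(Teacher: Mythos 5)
Your proof is correct and follows essentially the same route as the paper: charge each internal vertex the cost of sorting its children's labels, bound that cost by (number of children) times an $O(k)$ bound on the relevant label lengths, and sum over the component using the fact that the children counts total at most $k$. The only cosmetic difference is that you count $\binom{m_v}{2}$ pairwise comparisons each costing the shorter label's length, whereas the paper invokes an $O(mn)$ lexicographic sort of $m$ labels of $n$ bits (and organizes the sum tree-by-tree over the cycle roots); both yield the same per-vertex bound, and your explicit $2k$-bit and linear-memory arguments supply details the paper treats as immediate.
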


\begin{proof}
We only need to prove that these two algorithms perform at most $O(k^2)$ bit comparisons. Notice that the main cost at each internal node is to lexicographically sort the labels of its children, and the lexicographic sort of $m$ labels of size $n$ bits costs $O(mn)$ bit comparisons. Fix an arbitrary component $C$ of size $k$. Suppose that there are $t$ trees rooted at the cycle of $C$ with sizes $d_i$, $1\le i\le t$. Then, for labelling $C$, the number of bit comparisons is proportional to
$$
\sum\limits_{i=1}^{t}d_i\cdot k=k\sum\limits_{i=1}^{t}d_i=k^2,
$$
which concludes the proof.
\end{proof}

Combining the costs of labelling for all components, with the rest of the meta-algorithm, we obtain the following result for testing isomorphism.

\begin{theorem}
\label{theorem:general}
For any functional graphs $\cG$ and $\cH$ of arbitrary functions with $n$ vertices, there is an isomorphism test using
$O(k_* \cdot n)$ bit comparisons and linear memory complexity, where $k_* = \max\{k^{\cG}_{*}, k^{\cH}_{*}\}$ and $k^{\cG}_{*}$, $k^{\cH}_{*}$ are defined by~\eqref{eq:kmaxcmax}.
\end{theorem}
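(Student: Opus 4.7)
The plan is to carry out the same three-phase meta-algorithm as in Section~\ref{eq:IsomQuadr}, but with Algorithm~\ref{alg:LABEL} replaced by Algorithms~\ref{alg:llabel} and~\ref{alg:rlabel} (that is, the left-child--right-sibling encoding). The key is to sum the per-component costs from Lemma~\ref{lem:Canon Label} against the total budget $n = \sum_i c_i k_i$, and to observe that Phases~1 and~3 remain linear in the size of the canonical encoding, which is still $O(n)$.

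First, I would invoke Lemma~\ref{lemma:cycle_detection} to obtain the decomposition of both $\cG$ and $\cH$ into connected components together with the identification of the unique cycle in each component, in $O(n)$ time and memory. If at this stage the multiset of component sizes (and cycle lengths) of $\cG$ and $\cH$ differ, we stop and declare non-isomorphism; otherwise we proceed. Next, for each component of size $k$, I would run Algorithm~\ref{alg:general_labelling} using \textsc{LeftLabel} in place of \textsc{Label} to produce a canonical bit-string of length $2k$ (the bound $2k$ coming from the left-child--right-sibling binarisation, which introduces at most one ``0''-leaf per original vertex plus the standard $1/0$-encoding of the resulting full binary tree). By Lemma~\ref{lem:Canon Label}, each such component is labelled using $O(k^2)$ bit comparisons and $O(k)$ memory, so summing over all components and using $k_i \le k_*$ gives
\[
\sum_i c_i \cdot O(k_i^2) \le O(k_*) \sum_i c_i k_i = O(k_* \cdot n)
\]
bit comparisons for Phase~2, with total memory $O(n)$ since labels from sibling subtrees may be discarded once their parent label is formed.

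Finally, I would run Phase~3 exactly as described before Lemma~\ref{lemma:quadratic_trie}: insert the $c^{\cG}_i$ canonical labels of $\cG$ into a trie equipped with per-node counters, then, for each component label of $\cH$, traverse the trie and decrement the corresponding counter, declaring isomorphism iff all counters are zero at the end. The argument of Lemma~\ref{lemma:quadratic_trie} applies verbatim once we observe that the total label length is $\sum_i 2 c_i k_i = 2n$, so the trie has $O(n)$ nodes and all insertions and look-ups together cost $O(n)$ time and memory. Combining the three phases yields $O(n) + O(k_* \cdot n) + O(n) = O(k_* \cdot n)$ bit comparisons and $O(n)$ memory, as claimed.

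The only genuinely delicate point is the canonical labelling of Phase~2: one must check that the shift to the maximal rotation in Algorithm~\ref{alg:general_labelling} together with the sort of sibling labels in \textsc{LeftLabel} really produces an invariant of the rooted-cycle-of-trees structure, so that two components are isomorphic as functional sub-graphs if and only if their resulting bit-strings coincide. This is essentially the same observation used for the quadratic case, but I would want to verify it carefully for the left-child--right-sibling encoding, since children of a vertex now correspond to a \emph{right spine} in the binarised tree rather than to an unordered pair, and the sort in \textsc{LeftLabel} is what washes out the arbitrariness of that spine order. Once this invariance is in place, the complexity bookkeeping above finishes the proof.
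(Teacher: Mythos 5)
Your proposal is correct and follows essentially the same route as the paper: it combines Lemma~\ref{lemma:cycle_detection} for component identification, Lemma~\ref{lem:Canon Label} for the per-component labelling cost summed as $\sum_i c_i k_i^2 \le k_* \sum_i c_i k_i = k_* n$, and the trie argument of Lemma~\ref{lemma:quadratic_trie} on the $2n$-bit total encoding. Your closing caveat about verifying that the sorted left-child--right-sibling encoding is a genuine isomorphism invariant is a reasonable point of care, but it concerns the correctness discussion preceding Lemma~\ref{lem:Canon Label} rather than anything the paper handles differently in this proof.
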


\begin{proof}
We need to label all components. Using the sizes of various classes of components in the graph,
that is, Lemma~\ref{lem:Canon Label}, the overall running time is proportional to:
\begin{align}
\label{eq: ciki2}
\sum_{i=1}^{s} c_{i}k_{i}^2 \leq k_* \sum_{i=1}^{s} c_{i}k_{i} = k_*n,
\end{align}
where
$$
k_* = \max_{i=1, \ldots, s} |k_i|.
$$
Combining this result with Lemmas~\ref{lemma:cycle_detection} and~\ref{lemma:quadratic_trie} completes the proof.
\end{proof}

It is interesting to note the trade-off between $c_*$, the maximum number of components of same size
used in Theorem~\ref{thm:standardAlgos} and $k_*$, the largest component,
used in Theorem~\ref{theorem:general}, as it seemingly provides a choice
among algorithms to test the isomorphism depending of related features of
the graph. However, it should be emphasized that the comparison is not
straightforward as the algorithm of Theorem~\ref{theorem:general}
considers bit comparisons as the metric of the time cost, while
Theorem~\ref{thm:standardAlgos} employs more involved algorithms.

We note that the bound~\eqref{eq: ciki2} used in the proof of
Theorem~\ref{theorem:general}
together with  Lemma~\ref{lem:Canon Label} also lead
to an upper bound on the size of the labelling of any functional
graph.

\begin{cor}
\label{cor:Canon Label}
The meta-algorithm used for isomorphism testing in Theorem~\ref{theorem:general} uses at most $O(k_* \cdot n)$  bit comparisons and  linear memory space to build canonical labels of size of $2n$ bits that can be represented in a trie of size $O(n)
$ for any functional graph of size $n$, where $k_{*}$ is defined by~\eqref{eq:kmaxcmax}. 
\end{cor}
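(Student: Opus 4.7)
The plan is to assemble the corollary directly from bounds already established in Lemma~\ref{lem:Canon Label}, Lemma~\ref{lemma:cycle_detection}, Lemma~\ref{lemma:quadratic_trie}, and the proof of Theorem~\ref{theorem:general}; nothing genuinely new is needed beyond observing that the three estimates (bit comparisons, memory, label/trie size) each transfer to the general setting without change.

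First I would account for the label size. By Lemma~\ref{lem:Canon Label}, the canonical label produced for a component of size $k$ has exactly $2k$ bits (this already reflects the left-child/right-sibling encoding described before that lemma). Summing over all components gives
\[
\sum_{i=1}^{s} c_i \cdot 2k_i \;=\; 2\sum_{i=1}^{s} c_i k_i \;=\; 2n,
\]
which is the claimed $2n$-bit bound on the total canonical labelling of the graph.

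Next I would handle the bit-comparison count. The bound $O(k_* \cdot n)$ for Phase~2 is exactly what was computed in~\eqref{eq: ciki2} inside the proof of Theorem~\ref{theorem:general}, via
\[
\sum_{i=1}^{s} c_i k_i^2 \;\le\; k_* \sum_{i=1}^{s} c_i k_i \;=\; k_* n.
\]
Phases~1 and~3 contribute only linearly in $n$ by Lemmas~\ref{lemma:cycle_detection} and~\ref{lemma:quadratic_trie}, so they are absorbed into $O(k_* \cdot n)$.

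Finally I would argue the memory bound and the trie size simultaneously. Lemma~\ref{lemma:cycle_detection} gives linear memory for Phase~1, and Lemma~\ref{lem:Canon Label} gives linear memory for Phase~2 (child labels are discarded as parent labels are built). For the trie in Phase~3, the argument of Lemma~\ref{lemma:quadratic_trie} carries over verbatim once we replace the $n$-bit labelling by the $2n$-bit labelling: the number of trie nodes is at most the total number of bits inserted, i.e. at most $2n = O(n)$, and overlapping prefixes only increment counters rather than creating new branches, so the counter storage is also $O(n)$ in total. Assembling these three observations yields every clause of the corollary; the only step that requires any thought is checking that the trie-size argument of Lemma~\ref{lemma:quadratic_trie} is insensitive to the constant factor in the label length, which is immediate from its proof.
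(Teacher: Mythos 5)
Your proposal is correct and follows essentially the same route as the paper: the $2n$-bit total comes from summing the $2k$-bit per-component labels of Lemma~\ref{lem:Canon Label}, the $O(k_*\cdot n)$ bit-comparison count is exactly the bound~\eqref{eq: ciki2} from the proof of Theorem~\ref{theorem:general}, and the trie of size $O(n)$ (at most $2n$ nodes) is obtained as in Lemma~\ref{lemma:quadratic_trie}. The paper's own proof is just a terser version of the same assembly, so no further comment is needed.
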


\begin{proof}
As each connected component of $k$ vertices contributes $2k$ bits to the final labelling of the graph of size $n$, the total number of bits for representing all components is $2n$. 
Finally, using Phase 3, we can built a trie of at most $2n$ nodes to encode all canonical encodings.
\end{proof}

\subsection{Counting functional graphs}
\label{sub:counting}

We now present an algorithm to enumerate all functional graphs corresponding to
polynomials of degree $d$ over $\F_q$ except that $d=2$ and $2\mid q$.

\begin{theorem}
\label{thm:Nd alg}
For any $d$ and $q$  except for $d=2$ and $2\mid q$, we can create a list of all  $N_d(q)$
distinct functional graphs generated by all degree $d$ polynomials
$f \in \F_q[X]$ in
$O(d^2q^{d}\log^{2}q)$ arithmetic operations and comparisons of
bit strings of length $O(q^2)$.
\end{theorem}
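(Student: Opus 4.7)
The plan is to assemble a loop that sweeps over all polynomials of degree $d$, feeds each one into the meta-algorithm of Section~\ref{sec:alg}, and uses the trie from Phase~3 both as a deduplication device and as the output list. The key observation is that isomorphic graphs will produce identical canonical labels, so the trie at the end of the sweep contains exactly $N_d(q)$ strings, one per isomorphism class, stored implicitly as root-to-terminating-node paths.

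First I would enumerate polynomials by their coefficient vectors $(a_d, a_{d-1}, \ldots, a_0)$ with $a_d \ne 0$, giving $(q-1)q^d$ candidates. For each $f$, I would construct $\cG_f$ by evaluating $f$ at every $x \in \F_q$ using Horner's rule, at a cost of $O(d)$ field operations per point; measured in bit operations this is $O(d \log^2 q)$ per point, hence $O(dq \log^2 q)$ per polynomial. Next I would run Phase~1 (Algorithm~\ref{alg:components}) to extract the connected components, then Phase~2 to produce the canonical label: Algorithms~\ref{alg:general_labelling} and~\ref{alg:LABEL} in the quadratic case (with $q$ odd, so every non-cycle vertex has in-degree $0$ or $2$ and the tree labelling is a full binary-tree encoding) or Algorithms~\ref{alg:general_labelling}, \ref{alg:llabel}, \ref{alg:rlabel} for $d \ge 3$. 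By Lemmas~\ref{lemma:quadratic_canonical_labelling} and~\ref{lem:Canon Label}, each label is at most $2q$ bits long (so the concatenated labels fit easily within the stated $O(q^2)$-bit string length), computed in linear memory at the cost of $O(q^2)$ bit comparisons in the worst case. Phase~3 then searches the trie; if the label is absent we insert it (marking a new isomorphism class), otherwise we discard $f$. Summing the per-polynomial evaluation cost of $O(dq \log^2 q)$ bit operations over the $(q-1)q^d$ polynomials gives the $O(d^2 q^d \log^2 q)$ arithmetic cost, while labeling and trie insertion contribute the comparisons of bit strings of length $O(q^2)$ promised in the statement.

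The main technical obstacle, and the reason for the exclusion $d = 2$ with $2\mid q$, lies in Phase~2: in characteristic $2$, $X \mapsto X^2$ is the Frobenius automorphism, so a polynomial $X^2 + bX + c$ with $b = 0$ yields a \emph{bijective} functional graph consisting only of cycles, with no attached trees at all, while for $b \ne 0$ it remains $2$-to-$1$. The quadratic-specific labeler of Section~\ref{eq:IsomQuadr} assumes an almost-full binary tree structure at each cycle vertex, which degenerates in the first sub-case; and one cannot uniformly fall back on the general tree labeler without losing the tight $O(q \log q)$ per-graph bound that the stated complexity implicitly uses. Resolving it for included cases is straightforward because, when the characteristic is odd or $d \ne 2$, the tree attached to each cycle vertex is genuinely non-trivial and the labelling algorithms apply as written; verifying that the union of the bounds from Lemmas~\ref{lemma:cycle_detection}, \ref{lemma:quadratic_canonical_labelling} (or \ref{lem:Canon Label}), and \ref{lemma:quadratic_trie} multiplies cleanly against the number of polynomials is then a routine accounting step.
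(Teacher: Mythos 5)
There is a genuine gap, and it is in the complexity accounting at the heart of your plan: you sweep over \emph{all} $(q-1)q^d$ polynomials of degree $d$, at a per-polynomial cost of $O(dq\log^2 q)$ bit operations for building $\cG_f$, and then assert that the total is $O(d^2q^d\log^2 q)$. It is not: $(q-1)q^d\cdot O(dq\log^2 q)=O(dq^{d+2}\log^2 q)$, which exceeds the claimed bound by a factor of roughly $q^2/d$. The paper reaches the stated bound only because it first shrinks the set of polynomials to be examined. Using the affine conjugations $\phi_{\lambda,\mu}$ of~\eqref{eq:phi}, which do not change the isomorphism type of the functional graph, and the explicit coefficient formulas~\eqref{eq:AA}, every degree-$d$ polynomial is conjugated to one whose leading coefficient lies in a fixed set $\Omega$ of $m=\gcd(d-1,q-1)$ coset representatives of the $m$-th powers, and whose coefficient of $X^{d-1}$ vanishes (when $\gcd(d,q)=1$), or of $X^{d-1}$ or $X^{d-2}$ vanishes (when $p\mid d$, $d>2$). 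This leaves only $O(dq^{d-1})$ representative polynomials, and $O(dq^{d-1})\cdot O(dq\log^2 q)=O(d^2q^d\log^2 q)$, with every isomorphism class still represented because conjugation preserves $\cG_f$ up to isomorphism. Without this normalization step your algorithm is correct but does not meet the stated operation count, so the theorem as stated is not proved.

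Relatedly, your explanation of the exclusion $d=2$, $2\mid q$ is not the real obstruction. The general labelling machinery (Algorithms~\ref{alg:general_labelling}, \ref{alg:llabel}, \ref{alg:rlabel} and Corollary~\ref{cor:Canon Label}) handles arbitrary functional graphs, including the purely cyclic graphs of bijections such as $X^2+c$ in characteristic $2$, within the same $O(q^2)$ bit-comparison budget; nothing in Phase~2 degenerates there. The case is excluded because the normalization fails: when $p\mid d$ one needs $d>2$ to be able to force $A_{d-2}=0$ via the choice of $\mu$ in~\eqref{eq:AA}, and for $d=2$ in characteristic $2$ neither the $X^{d-1}$ nor the $X^{d-2}$ coefficient can be eliminated, so the reduction to $O(dq^{d-1})$ representatives breaks down. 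Your Phase~2/Phase~3 description (labels of $2q$ bits, trie deduplication, $O(q^2)$ bit comparisons per graph) does match the paper; the missing idea is the conjugation-based pruning of the polynomial list.
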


\begin{proof}
Let $m = \gcd(d-1, q-1)$ and let $\Omega=\{\omega_1, \ldots, \omega_m\}$ be a set of representatives of the quotient group $\F_q^*/\cH_m$, where $\cH_m$ is
the group of $m$-th powers in $\F_q$, that is
$$
\cH_m = \{\eta^m:\eta\in \F_q^*\}.
$$

Recall the automorphism $\phi_{\lambda,\mu}$ defined in~\eqref{eq:phi} for any $\lambda\in \F_q^*$ and $\mu \in \F_q$.
We verify that for a polynomial
\begin{equation}
\label{eq: poly f}
f(X) = \sum_{j=0}^d a_j X^j \in \F_q[X], \qquad \deg f = d,
\end{equation}
 we have
\begin{align*}
\phi_{\lambda,\mu}^{-1} \circ f \circ \phi_{\lambda,\mu}(X)
&= \lambda^{-1} \(f(\lambda X + \mu)  - \mu\) \\
&=\sum_{j=0}^d A_j X^j,
\end{align*}
for some coefficient $A_j \in \F_q$, $j=0, \ldots,d$. In particular, we have
\begin{equation}
\label{eq:AA}
\begin{split}
&A_{d} = \lambda^{d-1} a_d, \\
&A_{d-1} = \lambda^{d-2} \mu da_d +
\lambda^{d-2} a_{d-1},\\
&A_{d-2}=\lambda^{d-3} \mu^2\frac{d(d-1)}{2}a_d+\lambda^{d-3}\mu(d-1)a_{d-1}+\lambda^{d-3}a_{d-2} .
\end{split}
\end{equation}

We claim that for any polynomial
$f \in \F_q[X]$ of  the form~\eqref{eq: poly f} we can find $\lambda \in \F_q^*$
such that $A_{d}(a_d;\lambda,\mu)  \in \Omega$. Indeed, we can assume that $a_d=\omega_i\eta^m$ for some $i$ ($1\le i \le m$) and $\eta\in \F_q^*$.
Since there exist two integers $s,t$ such that $s(d-1)+t(q-1)=m$, we have $a_d=\omega_i\eta^{s(d-1)}$. Then choosing $\lambda=\eta^{-s}$, we get $A_{d}(a_d;\lambda,\mu)=\omega_i  \in \Omega$.

If $\gcd(d,q)=1$, then we can find an element $\mu \in \F_q$
such that  $A_{d-1}= 0$. Thus, it suffices to consider the
polynomial $F$ of the form $F(X) = A_dX^d + g(X)$ where $A_d \in \Omega$
and $g(X) \in \F_q[X]$ is of  degree $d-2$.
Therefore, it is enough to examine the graphs $\cG_F$ only for such $mq^{d-1}< dq^{d-1}$ polynomials $F$.

Assume that $\gcd(d,q)\ne 1$. Then, we must have $d>2$. Noticing that $d(d-1)/2$ is divisible by the characteristic $p$ of $\F_q$, if $A_{d-1}\ne 0$ (that is $a_{d-1}\ne 0$), we can choose $\mu \in \F_q$ such that $A_{d-2}=0$. So, it is enough to examine the graphs $\cG_F$ only for such $2mq^{d-1}< 2dq^{d-1}$ polynomials $F$ (that is satisfying $A_d\in \Omega$ together with $A_{d-1}=0$ or $A_{d-2}=0$).

Given  such a polynomial $f \in \F_{q}[X]$ of degree $d$,
we can construct the graph $\cG_{f}$ in time $O(dq\log^{2} q)$ (see~\cite{BZ10}). After this, by Corollary~\ref{cor:Canon Label}, for each graph, in time
$O(q^2)$ we compute its canonical label. Using the above discussion and inserting these labels in an ordered list
of length at most $ N_{d}(q)$ (or discarding if the label already in the list) gives an overall time of $O(dq^{d-1}\cdot dq\log^{2} q)=  O(d^2q^{d}\log^{2}q)$.
 \end{proof}

 In particular, the running time of the algorithm of
 Theorem~\ref{thm:Nd alg} is at most $d^2q^{d+2+o(1)}$.

\section{Numerical results}

\subsection{Preliminaries}

We note that the periodic structure of functional graphs has been extensively studied
numerically (see, for example,~\cite{BGHKST}).
These results indicate that ``generic'' polynomials lead to graphs with cycle
lengths with the same distribution as of those associated with random maps
(see~\cite[Section~5]{BGHKST}).

It is not difficult to see that for an odd $q$, the functional graph of any quadratic
polynomial over $\F_q$ has $(q-1)/2$ leaves. Indeed, for $f(X)= X^2+a $ the node $a$ is always an
inner node with in-degree $1$ while other nodes are of in-degree 0 or 2. Thus there are
$1 + (q-1)/2 = (q+1)/2$ inner nodes and $(q-1)/2$ leaves. On the other hand,  the graph of a random map on $p$ nodes
is expected to have $p/e \approx 0.3679 \ p$ leaves. It is possible that there are some
other structural distinctions. Motivated by this,  we have studied numerically
several other parameters of functional graph.

Our tests have been limited to quadratic polynomials in prime fields, which
can be further limited to polynomials of the form $f(X) = X^2 + a$, $a \in \F_p$.
Various properties of the corresponding function graphs $\cG_f$ have been
tested for all $p$ polynomials of this form for the following sequences of primes:
\begin{itemize}
\item all  odd primes up to 100 (mostly for the purpose of testing our algorithms,
but this has also revealed an interesting property of $N_2(17)$);
\item for the sequence of primes between 101 and 102407
where each prime is approximately twice the size of its
predecessor;
\item for the sequence of 30 consecutive primes between 204803 (which could also be
viewed as the last element of the previous group)  and 205171;
\item for the sequence of 10 consecutive primes between 500009 and 500167;
\item for the  prime 1000003.
\end{itemize}

For these primes, we  tested the number of
distinct primes and also average and extreme values of
several basic parameters of the graphs
$\cG_f$.

Our numerical results revealed that some  of these parameters are the same as those
of random graphs, but some  (besides the aforementioned number of inner nodes)
deviate in a rather significant way. More importantly (and as far as we are aware), some of these parameters of graphs have never been discussed in the literature
before this work. Using our
practical algorithms
of Section~\ref{sec:alg} we have initiated the study of these interesting parameters.

We present some of our numerical results (limited to those that show some
new and unexpected aspects in the statistics of the graphs $\cG_f$),
only for the primes of the last two groups,
that is, for the set of primes
\begin{equation*}
\begin{split}
\{500009&, 500029, 500041, 500057, 500069, 500083, \\
& 500107, 500111, 500113, 500119, 500153, 500167, 1000003\}.
\end{split}
\end{equation*}

\subsection{Number of distinct graphs}

For all tested primes we have $N_2(p) = p$ except for $p = 2,17$ in which cases
$N_2(2)=3$ and $N_2(17) = 16$. This indicates that most likely we have $N_2(p)=p$ for any
odd prime $p$, except for $p=2,17$. However, proving this may be difficult as
the cases of $p=2,17$ show that there is no intrinsic reason for this to be true
(apart from the fact that, as $p$ grows, the probability for this to occur becomes smaller).

\subsection{Cyclic points and  the giant components}

Our numerical tests show that the   average values of
\begin{itemize}
\item the number of cyclic points,
\item the size of the largest connected components,
\end{itemize}
behave like expected from  random maps, which are  predicted to be $\sqrt{\pi p/2}$, (see~\cite[Theorem~2~(ii)]{FlOdl}) and $\gamma p$
where $\gamma=0.75788\ldots$,
(see~\cite[Theorem~8~(ii)]{FlOdl}), respectively.

It is also interesting to investigate the  extreme values.
More precisely, let $c(f)$ be the number of cyclic points of $\cG_f$
and let
$$
C(p) = \max\{c(f)~:~ f(X) = X^2 + a, \ a\in \F_q\}.
$$
In all our tests, except for the primes $p=5, 13, 17$, the value of $c(f)$
is maximised on the function graphs of polynomials
$f_0(X) = X^2$ and $f_{-2}(X) = X^2-2$, for which
$$
c(f_0) = r+1\mand c(f_{-2}) = (r + s)/2,
$$
where $r$ is the largest odd divisor of $p-1$ and
$s$ is the largest odd divisor of $p+1$,
see~\cite[Theorem~6~(b)]{VaSha} and~\cite[Corollary~18~(b)]{VaSha},
respectively  (note that in~\cite{VaSha} the polynomials are
considered as acting on $\F_p^*$). In particular, if $p\equiv 3 \pmod 4$ then the function
graph of $X^2$ has  the largest possible number
of cyclic points, which is  $(p+1)/2$. Hence,
$$
C(p) = (p+1)/2, \qquad \text{for}\ p\equiv 3 \pmod 4.
$$
 We also note that for any $p\ge 3$,
\begin{equation}
\label{eq:Cp LB}
C(p) \ge \max\{r+1, (r + s)/2\} \ge (p+3)/4.
\end{equation}
Furthermore,  if $f(X) = X^2 + a$ with $a\in \F_p^*$ then the number of cyclic points of
$\cG_f$ is at most $3p/8 + O(1)$. Indeed, let $\cV_f = \{f(x)~:~x\in \F_p\}$ be the value set of $f$
(that is, the set of inner nodes of $\cG_f$). Clearly, $v \in\cV_f$ if $v-a$ is
quadratic residue modulo $p$.  Since for the sums of Legendre symbols
modulo $p$ we
have
$$
\left|\sum_{v \in \F_p} \(\frac{(v-a)(-v-a)}{p} \) \right| = 1
$$
(see~\cite[Theorem~5.48]{LN}), we see that there are $p/4 + O(1)$
values of $v \in \F_p$ with $v, -v \in \cV_f$. However, because $f(v) = f(-v)$,  it is clear that
only one value out of $v$ and $-v$ can be a cyclic point. Hence, the number
of cyclic points in $\cG_f$ for $f(X) = X^2 + a$ with $a\in \F_p^*$ is at most
$3p/8 + O(1)$.
In particular, we now see from~\eqref{eq:Cp LB} that
$$
C(p) = 3p/8 + O(1), \qquad \text{for}\ p\equiv 5 \pmod 8.
$$

The smallest number of cyclic points has achieved the value 2 for all tested primes
except $p=3$ and $p=7$ (for which this is 1).

In Table~\ref{tab:CyclPoint}, we provide some numerical data for
the number of cyclic points taken over all
polynomials except for the above two special polynomials. In particular,
we give the results for
$$
C^*(p) = \max\{c(f)~:~ f(X) = X^2 + a, \ a\in \F_q\setminus\{0,-2\}\}.
$$

\begin{table}[ht]
  \centering
  \begin{tabular}{|r|r|r|r|r|}
\hline
Prime & Min & Max & Average & Expected\\
\hline
500009 &  2 &  3578 &  886.2239149 &  886.2349015\\
500029 &  2 &  3620 &  885.9897086 &  886.2526257\\
500041 &  2 &  3798 &  885.0688786 &  886.2632600\\
500057 &  2 &  3468 &  884.9626481 &  886.2774389\\
500069 &  2 &  3556 &  885.8313906 &  886.2880730\\
500083 &  2 &  3596 &  884.9700189 &  886.3004792\\
500107 &  2 &  3527 &  884.5065536 &  886.3217460\\
500111 &  2 &  3732 &  884.3407057 &  886.3252912\\
500113 &  2 &  3805 &  885.1602624 &  886.3270634\\
500119 &  2 &  3873 &  884.5585953 &  886.3323802\\
500153 &  2 &  3472 &  884.8337362 &  886.3625078\\
500167 &  2 &  3644 &  884.7563204 &  886.3749130\\
1000003 &  2 &  5101 &  1252.451837 & 1253.316017\\
\hline
\end{tabular}
\caption{Cyclic points of  polynomials $f(X) \ne X^2, X^2-2$}
\label{tab:CyclPoint}
\end{table}

It is quite apparent from Table~\ref{tab:CyclPoint}
(and from our results for smaller primes)
that both the maximum values (that is,  $C^*(p)$) and the average values
behave regularly and, as we have mentioned, the average value
fits the model of a random map quite precisely.
We have not attempted to explain the behaviour of  $C^*(p)$.

The size of the largest component achieved the largest possible value $p$
in all tested cases (thus, for any $p$ some quadratic polynomial
generates a graph with just one connected component, see Table~\ref{tab:ConComp}
below). On the other hand, the smallest achieved size of the
 largest component does not seem to have a regular behaviour or even monotonicity.

\subsection{Number of components}

On the other hand, the average number of connected components has exhibited a
consistent (but slowly decreasing) bias of about 9.5\% over the
predicted value   $0.5 \log p$, see~\cite[Theorem~2~(i)]{FlOdl}.

For every tested prime, at least one
graph $\cG_f$ has just 1 component, while the largest number of components
has been behaving quite chaotically in all tested ranges.

The above is illustrated in Table~\ref{tab:ConComp}:

\begin{table}[H]
  \centering
  \begin{tabular}{|r|r|r|r|r|r|}
\hline
Prime & Min & Max & Average & Expected &Ratio\\
\hline
500009 & 1 & 135 & 7.19772 & 6.561190689 & 1.097014298\\
500029 & 1 & 631 & 7.20138 & 6.561210688 & 1.097568778\\
500041 & 1 & 58 & 7.19640 & 6.561222687 & 1.096807766\\
500057 & 1 & 139 & 7.19259 & 6.561238685 & 1.096224409\\
500069 & 1 & 48 & 7.19785 & 6.561250684 & 1.097024081\\
500083 & 1 & 56 & 7.19328 & 6.561264682 & 1.096325228\\
500107 & 1 & 129 & 7.19792 & 6.561288677 & 1.097028397\\
500111 & 1 & 104 & 7.19801 & 6.561292676 & 1.097041445\\
500113 & 1 & 160 & 7.19402 & 6.561294676 & 1.096432999\\
500119 & 1 & 81 & 7.19518 & 6.561300675 & 1.096608791\\
500153 & 1 & 143 & 7.19312 & 6.561334665 & 1.096289150\\
500167 & 1 & 77 & 7.19699 & 6.561348661 & 1.096876629\\
1000003 & 1 & 22 & 7.54330 & 6.907756779 & 1.092004285\\
\hline
\end{tabular}
\caption{Numbers of connected components}
  \label{tab:ConComp}
\end{table}

\subsection{Most popular component size}

As we have mentioned, motivated by the complexity
bounds of the algorithms of Section~\ref{sec:alg}, we
calculated the most popular size of the
connected components of $\cG_f$.
Our results for large primes are given  in Table~\ref{tab:PopSize1}.
For all tested primes $p$, the minimal value of the most common size is 1 or 2 (in fact,
2 becomes more common than 1 as $p$ grows), while the largest value is $p$,
as in  accordance with Table~\ref{tab:ConComp}, for every $p$
there is always connected a graph $\cG_f$.
The average value certainly shows a regular growth. However,
there does not seem to be any results for this parameter for graphs
of random maps, so we have not been able to compare the graphs $\cG_f$
with such graphs. Our numerical results seems to suggest that the average
of the most common size is proportional to $p^{1/2}$. However, we
believe that more numerical experiments are needed before one can confidently
formulate any conjectures.

\begin{table}[H]
  \centering
  \begin{tabular}{|r|r|r|r|r|r|}
\hline
Prime & Min & Max & Average  \\
\hline
500009 & 1 & 500009 & 1689.24\\
500029 & 2 & 500029 & 1642.27\\
500041 & 2 & 500041 & 1604.86\\
500057 & 1 & 500057 & 1670.49\\
500069 & 2 & 500069 & 1638.32\\
500083 & 2 & 500083 & 1628.07\\
500107 & 2 & 500107 & 1635.19\\
500111 & 2 & 500111 & 1657.12\\
500113 & 2 & 500113 & 1655.44\\
500119 & 2 & 500119 & 1573.22\\
500153 & 2 & 500153 & 1690.84\\
500167 & 2 & 500167 & 1638.63\\
1000003 & 2 & 1000003 & 2272.39\\
\hline
\end{tabular}
\caption{Most common size of components}
  \label{tab:PopSize1}
\end{table}

Furthermore, we also computed the number of components
of the most popular size (see Table~\ref{tab:PopSize2}).
Clearly, the minimal value has been 1 for all tested primes
(as before, we appeal to Table~\ref{tab:ConComp} that shows
that  for every $p$  there is connected graph $\cG_f$).
However, the largest multiplicity exhibits a surprising chaotic
behavior.

The average value clearly converges to a certain constant. However, we
made no attempt to conjecture the nature of this constant.

As above with the case of the most common size, this parameter has not been studied
and there is no random map model to compare against our
results.

\begin{table}[H]
  \centering
  \begin{tabular}{|r|r|r|r|r|r|}
\hline
Prime & Min & Max & Average  \\
\hline
500009 & 1 & 75 & 1.18909\\
500029 & 1 & 465 & 1.18856\\
500041 & 1 & 18 & 1.18776\\
500057 & 1 & 104 & 1.18739\\
500069 & 1 & 18 & 1.18811\\
500083 & 1 & 24 & 1.18729\\
500107 & 1 & 56 & 1.18853\\
500111 & 1 & 40 & 1.18767\\
500113 & 1 & 80 & 1.18835\\
500119 & 1 & 24 & 1.18710\\
500153 & 1 & 108 & 1.18818\\
500167 & 1 & 54 & 1.18826\\
1000003 & 1 & 4 & 1.18843\\
\hline
\end{tabular}
\caption{Numbers of components of the most common size}
  \label{tab:PopSize2}
\end{table}

\section{Further Directions}

It is certainly interesting to study multivariate
analogues of our results, that is, to study graphs
on $q^m$ vertices, generated by a system of $m$ polynomials
in $m$ variables over $\F_q$. It is possible that some results and
ideas of~\cite{FuBa} can be useful here.

Polynomial graphs over residue rings are also interesting and
apparently totally unexplored objects of study. They may also
exhibit some new and rather unexpected features.

Finally, we pose an open question of obtaining
reasonable approximations to
the expected values of the quantities
$k^{\cG}_{*}$ and  $c^{\cG}_{*}$ for a graph
associated with a random map.

\section*{Acknowledgement}

The authors are very grateful to Domingo G\'omez-P\'erez and to the referees for many useful comments.

The research of S.~V.~K. was partially supported
by Russian Fund for Basic Research, Grant N.~14-01-00332,
and Program Supporting Leading Scientific Schools, Grant Nsh-3082.2014.1;
that of F.~L.   by a Marcos Moshinsky fellowship;
that of B.~M. by Australian Research Council Grant DP110104560;
that of M.~S. by Australian Research Council Grant
DP130100237;
and that of  I.~E.~S. by Australian Research Council Grant
DP130100237  and Macquarie University Grant MQRDG1465020.

\end{document}